\newenvironment{enum_ass}{%
	\begin{enumerate}[label=(\textit{\alph*}),ref=(\textit{\alph*})]}
	{\end{enumerate}}
\newcommand{\trasp}{^*}
\newcommand{\R}{\mathbb{R}}
\newcommand{\C}{\mathbb{C}}
\newcommand{\M}{\mathcal{M}}
\newcommand{\TM}{\mathcal{T}_Y\mathcal{M}}
\newcommand{\ee}{\mathrm{e}}
\newcommand{\PhiA}{\Phi_{\tau}^A}
\newcommand{\PhiG}{\Phi_{\tau}^G}
\newcommand{\phiG}{\widetilde{\Phi}_{\tau}^G}
\newcommand{\LL}{\mathcal{I}}
\newcommand{\dd}{\, \mathrm{d}}
\newtheorem{theorem}{Theorem}
\newtheorem{lemma}{Lemma}
\newtheorem{prop}{Proposition}
\newtheorem{hp}{Assumption}
\definecolor{dunkelgruen}{rgb}{0.2,0.6,0.4} 
\title{Convergence of a low-rank Lie--Trotter splitting \\ for stiff matrix differential equations}
\author[$\dagger$]{Alexander~Ostermann}
\author[$\dagger$]{Chiara~Piazzola}
\author[$\ddag$]{Hanna~Walach}
\affil[$\dagger$]{Institut f\"ur Mathematik, Universit\"at Innsbruck, Austria}
\affil[$\ddag$]{Mathematisches Institut, Universit\"at T\"ubingen, Germany}
\affil[ ]{\url{alexander.ostermann} , \url{chiara.piazzola@uibk.ac.at} \\ \url{walach@na.uni-tuebingen.de}}
\date{\today}
\begin{document}

\maketitle

\begin{abstract}
We propose a numerical integrator for determining low-rank approximations to solutions of large-scale matrix differential equations. The considered differential equations are semilinear and stiff. Our method consists of first splitting the differential equation into a stiff and a non-stiff part, respectively, and then following a dynamical low-rank approach. We conduct an error analysis of the proposed procedure, which is independent of the stiffness and robust with respect to possibly small singular values in the approximation matrix. Following the proposed method, we show how to obtain low-rank approximations for differential Lyapunov and for differential Riccati equations. Our theory is illustrated by numerical experiments.
\end{abstract}

\section*{Key words}
Matrix differential equation, differential Lyapunov equation, differential Riccati equation, dynamical low-rank approximation, low-rank splitting \\
MSC: 65L04, 65L20, 65M12, 65F30, 49J20

\section{Introduction}
Dynamical low-rank approximations of matrices are widely used for reducing models of large size. Such an approach has a broad variety of application areas, such as control theory, computer algebra, signal processing, machine learning, image compression, and quantum molecular systems, see, e.g., \cite{BennerMena2, S15, Nonnemacher08, lubich15tii} and references therein. We are interested here in particular in computing low-rank approximations to solutions of large-scale matrix differential equations. 

In this paper we consider a class of semilinear stiff matrix differential equations of the form 
\begin{align*}
\dot{X}(t) = AX(t)+X(t)A\trasp+G(t,X(t)), \qquad X(t_0) = X^0,
\end{align*}
where $X(t) \in \mathbb{C}^{m \times m}$, $G: [t_0,\infty) \times \mathbb{C}^{m \times m} \rightarrow \mathbb{C}^{m \times m}$ is nonlinear, and $A \in \mathbb{C}^{m \times m}$ is time invariant. In many applications, the matrix $A$ arises from the spatial discretization of a differential operator. Therefore, it gives rise to a stiff term. The nonlinearity $G$, however, is assumed to be non-stiff. The objective of this paper is to determine a low-rank approximation to the solution of the given matrix differential equation. 

A possible method for obtaining low-rank approximations to solutions of matrix differential equations is the dynamical low-rank approximation proposed in \cite{KL07}. This approach yields a differential equation for the approximation matrix on the low-rank manifold. Recently, an efficient integrator, the so-called projector-splitting integrator, was proposed in \cite{LO14} for computing the solution numerically. 
A comprehensive error analysis for this integration method is given in \cite{KLW16}. Note that the error bounds in this analysis depend on the Lipschitz constant of the right-hand side of the considered matrix differential equation, amongst others. Therefore, this proof does not extend to the present situation in an obvious way.  

In this work we propose a novel approach, which yields low-rank approximations for stiff matrix differential equations. Our method is derived in two steps. To handle the difficulty with the stiff part, we first split the matrix differential equation into its stiff part $AX+XA\trasp$ and the non-stiff nonlinearity $G$. Second, we follow the concept of the dynamical low-rank approximation for both arising subproblems. The linear subproblem can be solved exactly and efficiently by means of exponential integrators and the rank of this solution is preserved. The nonlinearity $G$ is integrated with the projector-splitting integrator \cite{LO14}. 
We conduct a convergence analysis for the proposed method where, because of the beneficial way of splitting, we succeed in giving error bounds which are independent of the norm of $A$.

Moreover, our integration method is independent of small singular values, which might appear in the approximation matrix. When following the original approach of \cite{KL07}, one would have to solve a modified differential equation whose right-hand side has a Lipschitz constant inversely proportional to the smallest singular value of the approximation matrix. This leads to computational difficulties as well as to a severe local Lipschitz constant in the error analysis, see \cite{KL07, KLW16}. 
The robustness of our integrator with respect to the presence of small singular values is inherited from the projector-splitting integrator, see \cite{KLW16} and could be exploited to change the rank adaptively. This would require an appropriate error monitor with respect to the choice of the approximation rank. Such strategy is feasible, but we do not address this matter here.   

It is possible to extend our approach to tensor differential equations. There, the dynamical low-rank approximation to tensors of different formats, such as tensor trains \cite{LOV15}, Tucker tensors \cite{LVW18} or hierarchical Tucker tensors \cite{LRSV13} can be applied.

The paper is structured as follows. In Section \ref{example} we illustrate the quality of the proposed approach with the help of a numerical example. In Section \ref{procedure}, we derive our method in detail. In Sections \ref{conv_theorem} and \ref{proofs} we conduct a comprehensive error analysis and give error bounds, which are independent of the norm of $A$ and of small singular values, which might appear in the approximation matrix.
Some extensions and further convergence results are given in Section \ref{extensions}.
After having presented our approach and its convergence analysis, we show how to apply the method to two essential representatives of this class of matrix differential equations: differential Lyapunov equations (DLEs) and differential Riccati equations (DREs). Finally, we illustrate our theoretical result by a numerical experiment. 
Some complementary numerical experiments can be found in \cite{MOPP17}.

\section{A motivating example}\label{example}
In this work we are interested in low-rank approximations to solutions of semilinear stiff differential equations. The integrator we propose here is a first-order method based on a splitting, which separates the stiff linear part from the non-stiff nonlinear one. The solutions of the two arising subproblems are approximated by low-rank matrices. The linear subproblem can be integrated efficiently by an exponential integrator, whereas the solution of the nonlinear differential equation is approximated by the dynamical low-rank method \cite{KL07}. The arising differential equation for the nonlinearity on the low-rank manifold is finally integrated by the projector-splitting integrator \cite{LO14}. 

The main advantage of the integration method we propose is its insensitivity against stiffness.
We illustrate this favourable behaviour with the help of an example.  

Consider the following two-dimensional partial differential equation in the variable $v(t,x,y)$
\begin{align*}
\partial_t v = \alpha\Delta v + v^3, \qquad v(0,x,y) = 16 \, x(1-x)y(1-y), 
\end{align*}
where $ \alpha  =1/50$. We solve this problem on the spatial domain $\Omega = [0,1]^2$, subject to homogeneous Dirichlet boundary conditions, for times $0 \leq t \leq T$. 
We discretize this partial differential equation in space with $m$ inner points in each direction and denote the grid size by $h$, which is $h = \frac{1}{m+1}$.
The inner grid points in the $x$ and $y$ direction are denoted by  
\[x_i = i h \quad \text{and}\quad y_j = jh \quad \text{for} \quad 1\leq i,j\leq m, \] 
respectively. The differential operator is discretized by means of second order standard finite differences. 
Denoting the one-dimensional stencil matrices in the $x$ and $y$ direction by $A_{x}$ and $A_{y}$, respectively, this results in the matrix differential equation 
\begin{align*}
\dot{U}(t) = \alpha A[U(t)] + U(t)^3, \qquad U(0) = U_0, 
\end{align*}
where $A[U(t)] = A_{x} U(t) + U(t) A_{y}$ and $U(t) \in \R^{m \times m}$. The component $U_{ij}(t)$ is the sought after approximation of $v(t,x_i,y_j)$, $1\leq i,j\leq m$. The nonlinearity is realised by an entrywise product. 

In our numerical experiment we choose $m = 500$. The reference solution is computed with DOPRI5 applied to the equivalent vector-valued equation. It is a Runge--Kutta method of order 5 with adaptive step size strategy \cite{HNW93} with high precision.
In Figure \ref{fig:pde} left, we plot the first 30 singular values of the reference solution at $T = 0.5$. 

\begin{figure}[hbt]
	\begin{minipage}[b][4.10cm][t]{.465\textwidth}
		\centering
		\includegraphics[width = \columnwidth, height= 3.9cm]{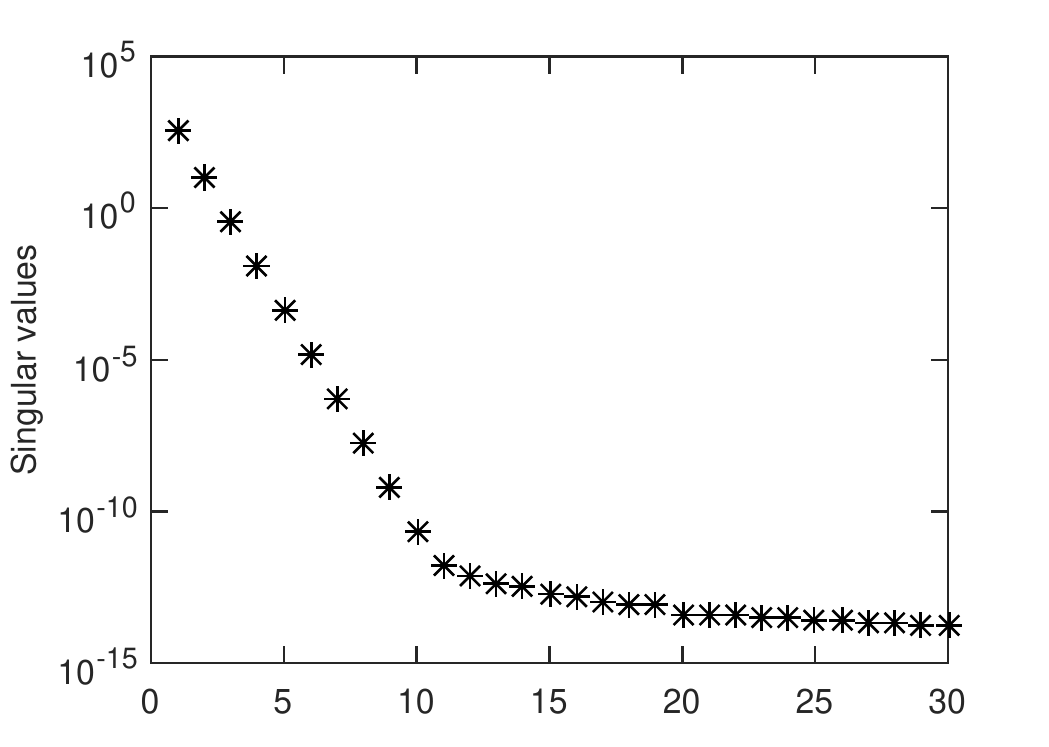}
	\end{minipage}
	\hfill
	\begin{minipage}[b][4.10cm][t]{.465\textwidth}
		\centering
		\includegraphics[width = \columnwidth, height= 4.09cm]{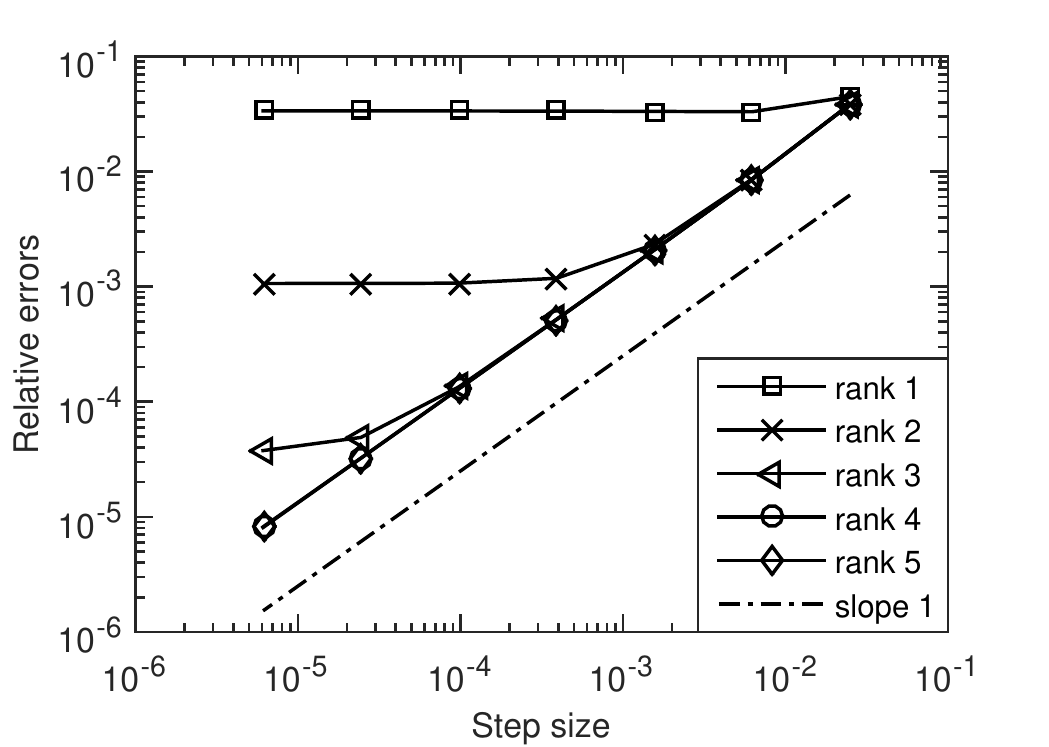}
	\end{minipage}
	\caption{Results for the solution of the considered partial differential equation at $T=0.5$. Left: First 30 singular values of the reference solution computed with DOPRI5. Right: Error of our proposed first-order splitting as a function of the step size and the approximation rank.}
	\label{fig:pde}
\end{figure}

We observe that the singular values decay quite fast. Figure \ref{fig:pde} right shows the errors of our proposed method for different approximation ranks. The error is measured in the Frobenius norm. The figure suggests an explicit dependence of the error on the rank and on the step size. If the approximation rank is chosen sufficiently large (rank 4 and 5) we solely observe the first-order error due to the splitting into the linear and the nonlinear subproblems. On the other hand, a bad choice of the approximation rank (rank 1, 2 and 3) leads to a stagnation of the error, independently on the refinement of the time step size. 

Note that standard explicit integrators would have to satisfy a CFL-like condition of the form $\tau L \leq 1$, where $L$ is the Lipschitz constant of the right-hand side of the matrix differential equation for $U(t)$ and $\tau$ is the time step size. For our choice of the parameters the time step size would be limited by the condition $\tau \leq L^{-1} \approx 2 \cdot 10^{-4}$. Our integrator, on the other hand, works fine for much larger time step sizes.

Higher dimensional partial differential equations lead to \emph{tensor differential equations} for the approximation \emph{tensor} $U(t)$. The differential operator has to be appropriately discretized, e.g., for the 3D-Laplacian the seven-point stencil tensor is obtained. We determine a low-rank solution by solving the linear part exactly and afterwards, depending on the underlying low-rank format of the approximation tensor, such as tensor trains \cite{OT09, O11} or Tucker tensors \cite{T66}, we apply the corresponding projector-splitting integrator \cite{LOV15} or \cite{LVW18}.

\section{A low-rank approximation of stiff matrix differential equations}\label{procedure}
We consider the following matrix differential equation 
\begin{align}\label{eq}
\dot{X}(t) = AX(t)+X(t)A\trasp+G(t,X(t)), \qquad X(t_0) = X^0,
\end{align}
where $X(t) \in \mathbb{C}^{m \times m}$ and $G:[t_0,\infty) \times \mathbb{C}^{m \times m} \rightarrow \mathbb{C}^{m \times m}$. The matrix $A \in \mathbb{C}^{m \times m}$ and its conjugate transpose, denoted by $A\trasp$, are time-independent. We restrict our attention here to parabolic partial differential equations; other settings are described in Section \ref{extensions}. For the moment, the matrix $A$ is typically the spatial discretization of an elliptic differential operator. Therefore, the stiffness of \eqref{eq} is induced by the matrix $A$. The nonlinearity $G$, however, is assumed to be non-stiff.
The exact full-rank solution of the above differential equation can be represented by the variation-of-constants formula as 
\begin{align*}
X(t) =\ee^{(t-t_0) A} X(t_0)\ee^{(t-t_0) A\trasp} + \int_{t_0}^t \ee^{(t-s)A} G(s,X(s))\ee^{(t-s)A\trasp} \dd s.
\end{align*} 
The aim of this work is to compute approximate solutions to $X(t)$, which are of low rank $r$ with $r \ll m$. 
We propose an integrator based on splitting methods. 

The construction of our integrator is described in the following sections. The properties of the scheme are also illustrated.

\subsection{Splitting into two subproblems}\label{split}
The structure of the matrix differential equation~\eqref{eq} motivates us to use splitting methods. For an introduction to this class of numerical integrators, we refer to \cite{HLW00} and \cite{MLQ02}. The idea behind the proposed splitting method is to benefit from the independent integration of the two arising subproblems. 

Now, splitting~\eqref{eq} into a stiff and a non-stiff part yields the following two subproblems: 
\begin{equation}\label{sub1}
\dot{X_1}(t)  = AX_1(t)+X_1(t)A\trasp, \qquad X_1(t_0) = X_1^0 
\end{equation}
and 
\begin{equation}\label{sub2}
\dot{X_2}(t) = G(t,X_2(t)), \qquad X_2(t_0) = X_2^0.
\end{equation}
We denote the solutions to the subproblems~\eqref{sub1} and \eqref{sub2} at time $t_0+\tau$ with initial values $X_1^0$ and $X_2^0$ by $\Phi_{\tau}^A(X_1^0)$ and $\Phi_{\tau}^G (X_2^0)$, respectively. Our strategy is to solve the differential equations for $X_2(t)$ and $X_1(t)$. An approximate solution of \eqref{eq} is obtained by applying the Lie--Trotter splitting scheme with step size $\tau$:
\begin{equation}\label{lie}
	\mathcal{L}_{\tau} := \PhiA \circ \PhiG.
\end{equation} 
Note, that we will refer to this scheme as \emph{full-rank Lie--Trotter splitting}. 
It results in an approximation $X^1$ of the solution $X(t)$ of~\eqref{eq} at $t = t_0 + \tau$.  Starting with $X^0 = X_2^0$, we obtain 
\begin{align*}
X^1 = \mathcal{L}_{\tau} (X^0) = (\PhiA \circ \PhiG) (X^0). 
\end{align*}
Note that the numerical solution at time $t_k = t_0+k\tau$ is $X^k = \mathcal{L}_{\tau}^k (X^0)$.
The exact solution of the homogeneous problem \eqref{sub1} is given by 
\begin{align*}
X_1(t_0+\tau) = \ee^{\tau A}X_1^0 \ee^{\tau A\trasp}.
\end{align*}
Since $X_1^0$ is typically given in low-rank factorized form (see Section \ref{proj-split} below), $X_1$ is the result of the action of a matrix exponential. Therefore, it can be efficiently computed also for large step sizes $\tau$. Methods of choice are Taylor interpolation \cite{ALH11}, interpolation at Leja points \cite{CKOR16} and Krylov subspace methods \cite{S92}. Moreover, efficient implementations on GPUs are possible, see, e.g., \cite{EO13}.

The approximate solution $X^1$ is a full-rank matrix approximation to $X(t_1)$ after one time step. Since we aim to compute rank-$r$ approximations to $X(t)$ at the time grid points, we next determine low-rank solutions of~\eqref{sub1} and~\eqref{sub2}.

\subsection{The low-rank integrator}\label{lrsol}
Denoting the manifold of rank-$r$ matrices by 
\begin{align*}
\M:= \left\{Y(t) \in \C^{m \times m}: \text{rank} \ Y(t) = r \right\},
\end{align*} 
we seek a low-rank approximation $Y \in \M$ to the solution of \eqref{eq}. In Section~\ref{split}, we have already shown how to split the differential equation into~\eqref{sub1} and~\eqref{sub2}. Now, it is the objective to determine low-rank approximations $Y_1 \in \M$ and $Y_2 \in \M$ to $X_1$ and $X_2$, respectively. To this end, we denote by $\TM$ the tangent space of the low-rank manifold $\M$ at a rank-$r$ matrix $Y$. 

We first consider the stiff subproblem \eqref{sub1}. We observe that for any $Y \in \M$, $AY+YA\trasp \in\TM$ and thus, \eqref{sub1} defines a vector field on the low-rank manifold $\M$. Hence for an initial value on the low-rank manifold $\M$, the solution of \eqref{sub1} stays in $\M$, see \cite{HM96}. This means that subproblem \eqref{sub1} is rank-preserving and so starting with a rank-$r$ initial value $Y_1^0$, the solution of 
\begin{equation} \label{Ysub1}
	\dot{Y_1}(t) = AY_1(t)+Y_1(t)A\trasp, \qquad Y_1(t_0) = Y_1^0
\end{equation}
remains of rank-$r$ for all times. 

For the second subproblem \eqref{sub2} we employ the dynamical low-rank approach \cite{KL07}. There, a rank-$r$ solution $Y_2(t)$ is determined by requiring 
\begin{align*}
\dot{Y}_2(t) \in \mathcal{T}_{Y_2(t)}\M, \qquad \lVert \dot{Y}_2(t) - \dot{X}_2(t) \rVert = \text{min},
\end{align*}
where $\mathcal{T}_{Y_2(t)}\M$ is the tangent space of the low-rank manifold $\M$ at the current approximation $Y_2(t)$. The above condition is equivalent to orthogonally projecting the right-hand side of \eqref{sub2} onto the tangent space $\mathcal{T}_{Y_2(t)}\mathcal{M}$. This results in an evolution equation for $Y_2(t)$, which is of the form
\begin{align}\label{Ysub2}
	\dot{Y}_2(t) = P(Y_2(t))G(t,Y_2(t)), \qquad Y_2(t_0) = Y_2^0,
\end{align}
where the initial value $Y_2^0$ is on the low-rank manifold $\M$. The orthogonal projection is denoted by $P$. 
This differential equation needs to be solved numerically. 
A favorable integration scheme is the so-called projector-splitting integrator \cite{LO14} which will be described in detail in Subsection \ref{proj-split}. 
The authors of \cite{KLW16} have proved that this integrator is robust with respect to the presence of small singular values. This is a crucial property, since in most applications the rank might not be known in advance. For accuracy reasons the rank is often over-approximated and small singular values enter in the approximation matrices. 
Solving the system of differential equations for the low-rank factors of the solution, as proposed in \cite{KL07}, becomes cumbersome. Standard integrators such as explicit and implicit Runge--Kutta methods suffer from the possible ill-conditioning of the arising matrices. For further details we refer to the discussions in \cite{LO14,KLW16}. 

After having applied the projector-splitting integrator to \eqref{Ysub2}, the resulting low-rank approximation of $X_2(t)$ at $t_0+\tau$ is
\begin{align*}
 Y_2^1 = \phiG(Y_2^0),
\end{align*}
where $\phiG$ denotes the approximated solution operator of \eqref{Ysub2}.
In a nutshell, the integration method we propose consists of first splitting the matrix differential equation \eqref{eq} and then approximating the subproblems \eqref{sub1} and \eqref{sub2} with respect to low-rank. Hence combining the flow $\phiG$ of the low-rank solution of \eqref{sub2} with the exact flow $\PhiA$ of \eqref{sub1}, which is of low rank when starting with low-rank initial data, yields the desired approximation matrix $Y(t)$. We call this procedure \emph{low-rank Lie--Trotter splitting} and denote it by  
\begin{equation} \label{approx_lie}
	\LL_{\tau} := \PhiA \circ \phiG. 
\end{equation}
Thus, starting with $Y^0 = Y_2^0$, we obtain the rank-$r$ approximation of the solution of \eqref{eq} at time $t_0+\tau$, i.e.,  
\begin{equation} \label{method}
Y^1 = \LL_{\tau}(Y^0) = (\PhiA \circ \phiG) (Y^0),  
\end{equation}
where we assume $Y^0$ to be a rank-$r$ approximation to $X^0$. At $t_k = t_0+k\tau$ we obtain $Y^k = \mathcal{I}^k_{\tau} (Y^0)$.

\subsection{The projector-splitting integrator}\label{proj-split}
The low-rank approximation $Y_2$ is not computed directly from \eqref{Ysub2} by applying a standard integration method, but by an efficient integrator, which benefits from the underlying low-rank format for matrices.  
In the following, we drop the subscript in $Y_2$ and describe the integrator for any $Y \in \M$ and any problem of the form
\begin{align*}
\dot{Y}(t) = P(Y(t))G(t,Y(t)), \qquad Y(t_0) = Y^0 \in \M.
\end{align*}

The projector-splitting integrator introduced in \cite{LO14} is based on the observation that every rank-$r$ matrix $Y(t) \in \C^{m \times m}$ can be represented as 
\begin{align*}
	Y(t) = U(t)S(t)V(t)^*,
\end{align*}
where $U(t), V(t) \in \C^{m \times r}$ have orthonormal columns. The square matrix $S(t) \in \C^{r \times r}$ is invertible and has the same non-zero singular values as $Y(t)$. In contrast to the singular value decomposition (SVD), this non-unique factorization does not require $S(t)$ to be diagonal. From the computational perspective, this representation has the advantage of a significant reduction in memory requirements and computational cost if $r \ll m$.
 
The projector-splitting integrator makes use of this SVD-like factorization, in the sense that the time integration is performed only on the low-rank factors. 
It is based on splitting the projection $P(Y)$ onto the tangent space $\mathcal{T}_Y\M$ of the low-rank manifold $\M$. Following \cite[Lemma 4.1]{KL07}, the orthogonal projection $P(Y)$ at the current approximation matrix $Y = USV\trasp \in \M$ can be written as  
\begin{equation} \label{projection}
\begin{split}
P(Y) G(t,Y) &\ = UU\trasp G(t,Y) - UU\trasp G(t,Y) VV\trasp + G(t,Y) VV\trasp \\
&\  =: P^a(Y)G(t,Y) -P^b(Y)G(t,Y) + P^c(Y)G(t,Y).
\end{split}
\end{equation}
Further, $UU\trasp$ and $VV\trasp$ are orthogonal projections onto the spaces spanned by the range and co-range of $Y$, respectively.
One time step from $t_0 \to t_1=t_0+\tau$ of the first-order integrator consists of solving the evolution equations 
\begin{alignat*}{2}
\dot{Y}^a(t) &= P^a(Y)G(t,Y),  \qquad & Y^a(t_0) &= Y^0, \\
\dot{Y}^b(t) &= -P^b(Y)G(t,Y),  & Y^b(t_0) & = Y^a(t_1), \\
\dot{Y}^c(t) &= P^c(Y)G(t,Y),  & Y^c(t_0) & = Y^b(t_1)
\end{alignat*}
consecutively, where $Y^c(t_1)$ is the approximate solution to $Y(t_1)$. In practice, those differential equations have to be solved approximately using a numerical method, e.g., a Runge--Kutta method.
Higher-order methods can be obtained from the first-order scheme by employing the standard composition techniques, see \cite{LO14, KLW16}.

\section{The main convergence result} \label{conv_theorem}

In this section we describe the framework in which the convergence proof can be carried out and formulate the main convergence result. Further, we give an outline of the proof. The technical details are postponed to Section \ref{proofs}.
It is worth remarking that the convergence analysis of the low-rank Lie--Trotter splitting \eqref{approx_lie} is performed without introducing Lipschitz conditions of the full right-hand side of \eqref{eq} nor of the stiff subproblem \eqref{sub1}.  

Let us consider the Hilbert space $\C^{m\times m}$, endowed with the Frobenius norm $\lVert \cdot \rVert$. 
Let $A \in \C^{m\times m}$ and $G: [t_0,T] \times \mathbb{C}^{m \times m} \rightarrow \mathbb{C}^{m \times m}$.
In the following, we are given an initial data $X^0$ and a final integration time $T$ such that the matrix differential equation~\eqref{eq} has a solution $X(t)$ for $t_0\leq t\leq T$. 
We assume that, given a rank-$r$ approximation $Y^0$ of the initial value $X^0$ such that 
\begin{align*}
\lVert X^0 - Y^0 \rVert \leq \delta
\end{align*}
for some $\delta \geq 0$, the exact rank-$r$ solution
\begin{equation*}
Y(t) =\ee^{(t-t_0) A} Y^0 \ee^{(t-t_0) A\trasp} + \int_{t_0}^t \ee^{(t-s)A} P(Y(s))G(s,Y(s))\ee^{(t-s)A\trasp} \dd s
\end{equation*}
of the matrix differential equation \eqref{eq} exists for $t_0\leq t\leq T$. 

For proving convergence, we further need the following assumption:
\begin{hp} \label{hp_split}
	We assume that the following properties hold.
	\begin{enum_ass}
		\item  \label{A_ass}
		There exists $\omega \in \R$ and $C_s > 0$, such that the matrix $A$ satisfies
		\begin{align} 
		\label{bound} 
		\lVert \ee^{tA} Z \ee^{tA\trasp} \rVert & \leq \ee^{t\omega} \lVert Z \rVert,  \\ 
		\label{smooth_pr}
		\lVert \ee^{tA} (AZ+ZA\trasp) \ee^{tA\trasp} \rVert &\leq \frac{1}{t}C_s \ee^{t\omega}\lVert Z \rVert 
		\end{align}
		for all $t>0$ and all $Z \in \C^{m \times m}$.
		\item \label{G_ass} $G$ is continuously differentiable in a neighbourhood of the exact solution. 
	   	\item \label{M-R}
	   	There exists $\varepsilon >0$ such that for all $t_0\leq t\leq T$ 
	   	\begin{align*}
	    G(t,Y(t)) = M(t,Y(t)) + R(t,Y(t)),
		\end{align*}
		where $M(t,Y(t)) \in \mathcal{T}_{Y(t)} \M$ and $\lVert R(t,Y(t)) \rVert \leq \varepsilon$.
	\end{enum_ass}
\end{hp}

The above assumptions require some explanations and discussion. Moreover, we need to specify some crucial properties for the proof of the error bounds given in Theorem~\ref{main_th}.
	\begin{enum_ass} 
		\item 
		Matrix differential equations of the form \eqref{eq} are typically stemming from parabolic partial differential equations. We refer to, e.g., the example in Section \ref{example} and to the discussion about differential Lyapunov and differential Riccati equations in Sections \ref{sect:DLE} and \ref{sect:DRE}, respectively. 
		The matrix operator $F$, given by 
		\[ F(X) = AX+XA\trasp, \quad X\in \C^{m\times m} \] is equivalent to the operator $\mathcal{F}$ 
		\[\mathcal{F} (x) = \mathcal{A} x = (I_m \otimes A + A \otimes I_m ) x, \quad x =\text{vec}(X) \in \C^{m^2}, \]
		where we denote by $\otimes$ the Kronecker product and by $\text{vec}(\cdot)$ the columnwise vectorization of a matrix into a column vector. 
		Then, the bounds \eqref{bound} and \eqref{smooth_pr} can be translated using the vector 2-norm $\lVert \cdot \rVert_2$ as
		\begin{align*}
			\lVert \ee^{t\mathcal{A}} z \rVert_2 \leq \ee^{t\omega} \lVert z \rVert_2,  \\ 
			\lVert \ee^{t\mathcal{A}} \mathcal{A} z \rVert_2 \leq \frac{1}{t}C_s \ee^{t\omega}\lVert z \rVert_2, 
		\end{align*}
		for all $t>0$ and all $z = \text{vec}(Z) \in \C^{m^2}$.
		These properties are well known in the context of semigroup theory for strongly elliptic operators, see, e.g., \cite{EN06}, \cite{P83}. In particular, the Lumer-Phillips theorem \cite[Sect.$\,$12]{ReRo93} provides a practical criterion for generators of quasicontraction semigroups in Hilbert spaces. This theorem also applies to standard space discretizations of such operators and in particular shows that the constants $\omega$ and $C_s$ can be chosen independently of $m$. In particular, they are independent of the problem's stiffness.
		
		\item As a consequence of Assumption \ref{hp_split}\ref{G_ass}, the function $G$ is locally Lipschitz continuous with constant $L$, and $G$ is bounded by $B$ in a neighbourhood of the solution $X(t)$, i.e.,
		\begin{align}\label{LB}
		\begin{split}
		\lVert G(t,\widehat{X})-G(t,\widetilde{X}) \rVert &\leq L \lVert \widehat{X} -\widetilde{X} \rVert, \\
		\lVert G(t,\bar{X}) \rVert &\leq B,
		\end{split}
		\end{align}
		as long as  $\lVert \widehat{X}-X(t)\rVert \leq \gamma$, $\lVert \widetilde{X}-X(t)\rVert \leq \gamma$, and $\lVert \bar{X}-X(t)\rVert \leq \gamma$ for $t_0 \leq t \leq T$ for given $\gamma > 0$.
		The constants $L$ and $B$ depend on $\gamma$. 
		\item We assume that $G(t,Y)$ consists of a tangential part $M(t,Y)$ and a small perturbation term $R(t,Y)$. This means that $G$, when evaluated along the low-rank solution, is in the tangent space up to a small remainder of size $\varepsilon$. This assumption is crucial in order to have a good low-rank approximation, since if the remainder is large, low-rank approximation is inappropriate. 
	\end{enum_ass}

Having clarified the assumption, we are now in a position to state the main result of this paper.

\begin{theorem}[Global error of the low-rank Lie--Trotter splitting integrator]\label{main_th}
	Under Assumption \ref{hp_split}, there exists $\tau_0$ such that for all step sizes $0 < \tau \leq \tau_0$ the error of the low-rank Lie--Trotter splitting integrator \eqref{approx_lie} is uniformly bounded on $t_0\leq t_0+n\tau \leq T$ by 
	\begin{align*}      
	\lVert X(t_0+n\tau) - \LL_{\tau}^n (Y^0)\rVert \leq c_0 \tau (1+\lvert \log \tau \rvert) + c_1 \delta + c_2 \varepsilon, 
	\end{align*}
	where $c_0$, $c_1$ and $c_2$ depend on $\omega$, $C_s$, $L$, $B$ and $T$, but are independent of $\tau$ and $n$.  
\end{theorem}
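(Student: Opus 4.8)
The plan is to run a Lady Windermere's fan argument over the exact flow of \eqref{eq}. Write $\Psi_{t,s}$ for its solution operator, set $Y^k=\LL_\tau^k(Y^0)$ and $t_j=t_0+j\tau$, and telescope
\begin{align*}
X(t_0+n\tau)-\LL_\tau^n(Y^0)=\bigl(\Psi_{t_n,t_0}(X^0)-\Psi_{t_n,t_0}(Y^0)\bigr)+\sum_{k=0}^{n-1}\Bigl(\Psi_{t_n,t_{k+1}}\bigl(\Psi_{t_{k+1},t_k}(Y^k)\bigr)-\Psi_{t_n,t_{k+1}}\bigl(\LL_\tau(Y^k)\bigr)\Bigr).
\end{align*}
The first term is the propagated initial error; combining the variation-of-constants formula with \eqref{bound} and the local Lipschitz bound \eqref{LB} in a Gronwall estimate, it is controlled by $\ee^{(\omega+L)(T-t_0)}\delta$, which supplies the $c_1\delta$ contribution with a constant independent of the stiffness. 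It then remains to bound the propagated one-step defects $d_{k+1}:=\Psi_{t_{k+1},t_k}(Y^k)-\LL_\tau(Y^k)$.

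Next I would split each defect by inserting the full-rank Lie--Trotter flow $\PhiA\circ\PhiG$, writing $d_{k+1}=d_{k+1}^{\mathrm{s}}+d_{k+1}^{\mathrm{p}}$ with splitting part $d_{k+1}^{\mathrm{s}}=\Psi_{t_{k+1},t_k}(Y^k)-\PhiA(\PhiG(Y^k))$ and projector-splitting part $d_{k+1}^{\mathrm{p}}=\PhiA(\PhiG(Y^k))-\PhiA(\phiG(Y^k))$. Since $\PhiA$ is linear, $d_{k+1}^{\mathrm{p}}=\ee^{\tau A}\bigl(\PhiG(Y^k)-\phiG(Y^k)\bigr)\ee^{\tau A\trasp}$, and $\PhiG(Y^k)-\phiG(Y^k)$ is exactly the one-step error of the projector-splitting integrator relative to the exact (unprojected) flow of \eqref{sub2}. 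Here I would invoke the robust local error estimate of \cite{KLW16} together with Assumption~\ref{hp_split}\ref{M-R}: it yields $\lVert\PhiG(Y^k)-\phiG(Y^k)\rVert\le C(\tau^2+\tau\varepsilon)$ with $C$ independent of the singular values of $Y^k$ (this is where robustness enters, and it is legitimate because $Y^k\in\M$ by induction). Multiplying by the factor $\ee^{\tau\omega}$ from \eqref{bound}, propagating by the stable exact flow, and summing over the at most $T/\tau$ steps produces a $c\,(\tau+\varepsilon)$ contribution, accounting for the $c_2\varepsilon$ term.

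The crux is the stiff splitting defect $d_{k+1}^{\mathrm{s}}$. Using the variation-of-constants representations of $\Psi_{t_{k+1},t_k}$ and of $\PhiA\circ\PhiG$, I would write it as a double integral whose dominant integrand is the smoothed commutator $\ee^{\sigma A}(AG_k+G_kA\trasp)\ee^{\sigma A\trasp}$ with $G_k\approx G(t_k,Y^k)$, the lower-order pieces being governed by $L$, $B$ and the time-regularity of $G$ from Assumption~\ref{hp_split}\ref{G_ass}. A naive norm bound of $d_{k+1}^{\mathrm{s}}$ is only $O(\tau)$ per step --- already the scalar model $G\equiv G_0$, $A=-\lambda$ attains its worst case near $\lambda\sim\tau^{-1}$ --- so it must not be estimated in isolation. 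Instead I would keep it in semigroup-integral form and exploit the damping of the propagator: the leading part of $\Psi_{t_n,t_{k+1}}$ is the linear semigroup $\ee^{(t_n-t_{k+1})A}(\cdot)\ee^{(t_n-t_{k+1})A\trasp}$, the nonlinear correction being absorbed by a Gronwall step, and composing it with the internal semigroup gives total smoothing $\tfrac{C_s}{\sigma+(t_n-t_{k+1})}$ through \eqref{smooth_pr} over the whole remaining interval. Summing, $\sum_k\int_0^\tau\!\int_\rho^\tau\frac{\dd\sigma\,\dd\rho}{\sigma+(t_n-t_{k+1})}$ is dominated by the harmonic sum $\tau\sum_{j\ge1}1/j\sim\tau\lvert\log\tau\rvert$, which is exactly where the factor $\tau(1+\lvert\log\tau\rvert)$ comes from.

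Two standard ingredients make the telescoping rigorous, and a last one closes it. Stability of the exact flow: perturbations grow by at most $\ee^{(\omega+L)(t_n-t_{k+1})}\le\ee^{(\omega+L)(T-t_0)}$, obtained from \eqref{bound} and \eqref{LB} via Gronwall, hence uniformly in the stiffness. And an induction in $k$: the defect and Lipschitz estimates are valid only while $Y^k$ and the intermediate states within a step stay in the $\gamma$-neighbourhood of the exact solution; assuming the asserted bound through step $k$ keeps them there once $\tau_0$ is small enough (relative to $\gamma$), which closes the induction and fixes $\tau_0$. Collecting the three contributions yields $c_0\tau(1+\lvert\log\tau\rvert)+c_1\delta+c_2\varepsilon$ with constants depending only on $\omega,C_s,L,B,T$. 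I expect the main obstacle to be the third paragraph: the stiff splitting defect is genuinely first order per step and only becomes $O(\tau(1+\lvert\log\tau\rvert))$ globally thanks to parabolic damping, so the proof hinges on keeping the defect in its smoothing integral form, extracting the linear semigroup from the nonlinear propagator without losing the smoothing to the nonlinear correction, and carrying out the harmonic-sum estimate uniformly in $\lVert A\rVert$.
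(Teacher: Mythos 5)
Your proof is correct in substance but takes a genuinely different route from the paper's. The paper decomposes the error through two auxiliary trajectories into $X(t_n)-\mathcal{L}_\tau^n(X^0)$, $\mathcal{L}_\tau^n(X^0)-\mathcal{L}_\tau^n(Y^0)$ and $\mathcal{L}_\tau^n(Y^0)-\LL_\tau^n(Y^0)$, and runs Lady Windermere's fan with the \emph{numerical} full-rank splitting $\mathcal{L}_\tau=\PhiA\circ\PhiG$ as propagator (Propositions~\ref{spl_err} and~\ref{globerr}); you instead telescope along the \emph{exact} flow $\Psi$ and split each one-step defect inside the sum. The ingredients nevertheless coincide: your bound on $d^{\mathrm p}_{k+1}$ is exactly Lemma~\ref{locerr} (the $b_1\varepsilon\tau+b_2\tau^2$ estimate from \cite{KLW16}), your $\delta$-term is the paper's stability bound $\ee^{(L+\omega)(T-t_0)}\delta$, and your harmonic-sum estimate is precisely the bound \eqref{D2}, obtained by applying \eqref{smooth_pr} to the composed semigroup rather than to the one-step defect in isolation. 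The one place where your route is genuinely more delicate --- and you correctly flag it as the main obstacle --- is the propagation of the stiff defect by the \emph{nonlinear} exact flow: the crude Gronwall bound $\ee^{(\omega+L)(t_n-t_{k+1})}\lVert d_{k+1}\rVert$ applied to the nonlinear correction yields only $O(\tau)$ per step, hence $O(1)$ in total, so you must carry the smoothed, time-dependent bound of order $\min\bigl(\tau,\tau^2/(s-t_{k+1})\bigr)$ through the continuous Gronwall inequality to recover $O\bigl(\tau^2(1+\lvert\log\tau\rvert)\bigr)$ per defect. The paper sidesteps this by propagating with $\mathcal{L}_\tau$, whose one-step nonlinear correction $H$ is controlled by $C(\tau\lVert E^k\rVert+\tau^2)$ in terms of the \emph{global} error and feeds into a discrete Gronwall inequality, see \eqref{D1}, while the local errors are transported by pure powers of the semigroup, see \eqref{glob_err_formula_sp}. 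Your version buys a single unified fan without auxiliary trajectories; the paper's buys a cleaner separation of the smoothing estimate from the Gronwall step. Both close the induction keeping all iterates in the $\gamma$-neighbourhood in the same way, and both deliver constants independent of $\lVert A\rVert$.
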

Note that $\tau_0$ depends only on the size of the Lipschitz constant of $G$. 

In order to facilitate the analysis of \eqref{approx_lie}, we study the global error by introducing auxiliary quantities. The construction of the method already suggests that the global error is composed by the following terms: 
\begin{enumerate}[label=(\roman*)]
	\item The global error of the full-rank Lie--Trotter splitting \eqref{lie}, applied to \eqref{sub1} and \eqref{sub2}: 
	\begin{align*}  
	E^n_{sp} = X(t_0+n\tau) - (\PhiA \circ \PhiG)^n(X^0).
	\end{align*}
	\item The propagation of the difference between the full-rank initial data $X_0$ and its low-rank approximation $Y_0$ by the full-rank Lie--Trotter splitting \eqref{lie}: 
	\begin{align*}
	E^n_{\delta} =  (\PhiA \circ \PhiG)^n(X^0) - (\PhiA \circ \PhiG)^n(Y^0).
	\end{align*}
	\item The difference between the full-rank Lie--Trotter splitting \eqref{lie} and the low-rank Lie--Trotter splitting \eqref{approx_lie} applied to $Y_0$: 
	\begin{align*}
	E^n_{lr} = (\PhiA \circ \PhiG)^n (Y^0) - (\PhiA \circ \phiG)^n (Y^0). 
	\end{align*} 
\end{enumerate}
Hence, the global error in Theorem \ref{main_th} is obtained as the sum of $E^n_{sp}$, $E^n_{\delta}$ and $E^n_{lr}$ as illustrated in Figure \ref{fig:Windermere}. Those three contributions are studied in detail in the following section. 
	 
\begin{figure}
	\centering
	\includegraphics[trim = 80mm 30mm 120mm 60mm, width=.3\textwidth]{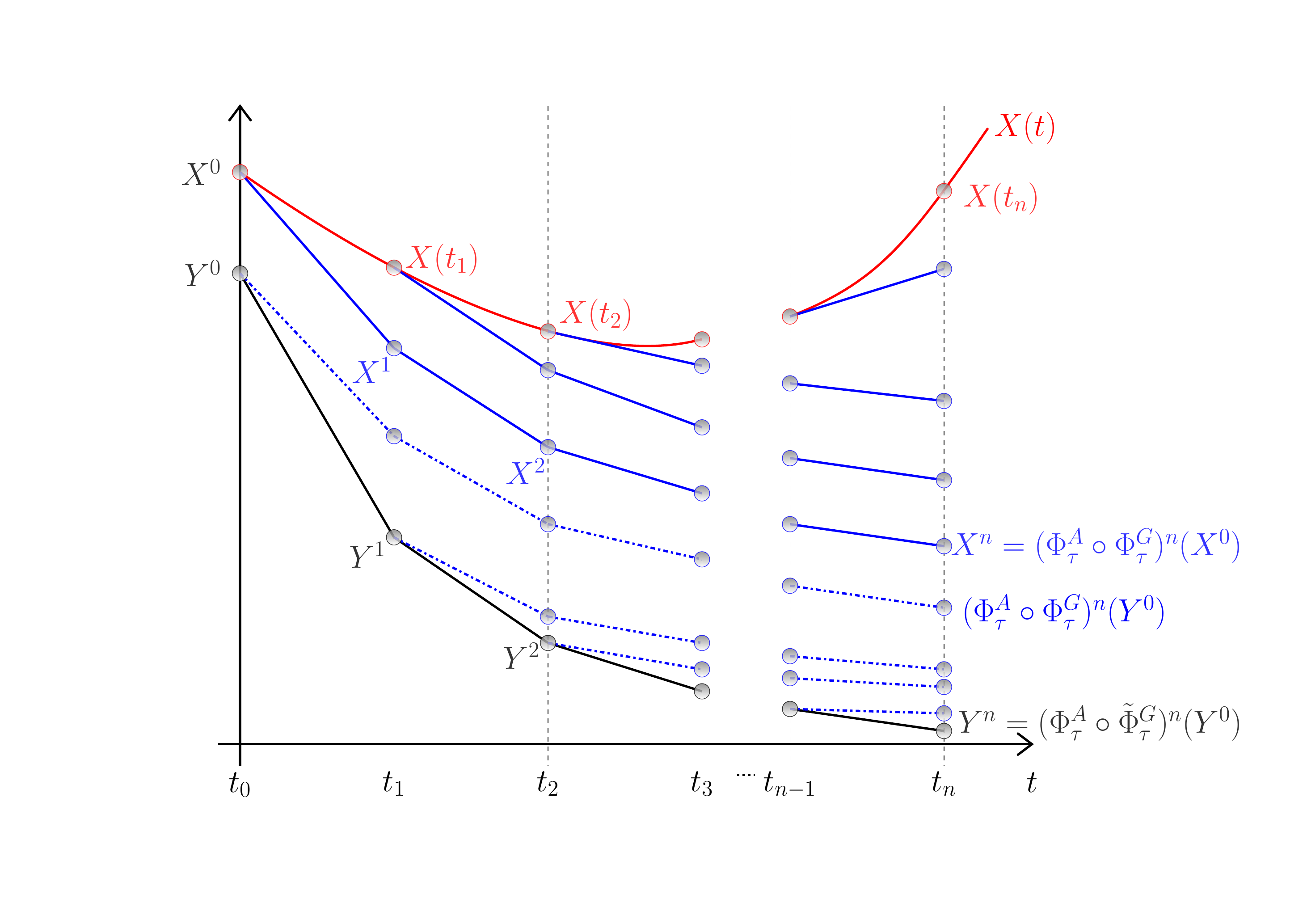}
	\caption{Schematic illustration of the convergence analysis. The uppermost curve (in red) depicts the exact solution $X(t)$ of \eqref{eq}, whereas the lowermost (in black) shows the solution obtained by the low-rank Lie--Trotter splitting \eqref{approx_lie}. All other lines (in blue) represent the auxiliary values obtained by the application of the full-rank Lie--Trotter splitting \eqref{lie} either to a full-rank initial data (continuous lines) or to a low-rank initial data (dash-dotted lines).}
	\label{fig:Windermere}
\end{figure}

\section{Detailed convergence analysis} \label{proofs}

The aim of this section is to provide a convergence analysis of the low-rank Lie--Trotter splitting \eqref{approx_lie}. We give a detailed proof of Theorem \ref{main_th} and in particular we state and prove error bounds for the three contributions listed above. First, we prove the error bound of the full-rank Lie--Trotter splitting in Subsection \ref{err_spl}, followed by the error estimate for the low-rank Lie--Trotter splitting in Subsection \ref{err_lr}. The propagation of the difference between the full and low-rank initial data requires just the stability of the full-rank Lie--Trotter splitting. This is shown in Subsection \ref{final_proof}.

Note that our proofs rely on the constants $L$ and $B$ in Assumption~\ref{hp_split}\ref{G_ass}. In order to bound these constants, we have to ensure that the numerical approximations stay in a fixed compact neighbourhood $\mathcal{U}$ of the exact solution. This follows (by recursion) from the given proofs, taking into account that the arising constants can be controlled in terms of $L$, $B$ and the final time $T$. An appropriate choice of the maximum step size $\tau_0$ finally guarantees that all considered approximations stay in $\mathcal{U}$. 

\subsection{The error of the full-rank Lie--Trotter splitting} \label{err_spl}

The convergence of the full-rank splitting scheme \eqref{lie} is stated in the following theorem. 
The ideas in the proof can be traced back to, e.g., \cite{EO15} and \cite{JL00}.

\begin{prop}[Global error of the full-rank Lie--Trotter splitting]\label{spl_err} \label{glob_err_prop_sp}
	Under Assumption~\ref{hp_split}, the full-rank Lie--Trotter splitting \eqref{lie} is first-order convergent, i.e., the error bound
	\[ \lVert X(t_0+n\tau) - (\PhiA \circ \PhiG)^n(X^0) \rVert \leq C \tau (1+\lvert \log \tau \rvert) \]
	holds uniformly on $t_0\leq t_0+n\tau \leq T$. The constant $C$ depends on $\omega$, $C_s$, $L$, $B$ and $T$, but is independent of $\tau$ and $n$. 
\end{prop}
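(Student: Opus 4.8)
The plan is to prove first-order convergence of the full-rank Lie--Trotter splitting by the standard \emph{Lady Windermere's fan} argument: bound the local error of a single step, and then use a stability (discrete Gronwall) argument to sum the propagated local errors into a global error bound. The crucial subtlety, which the presence of the stiff operator $A$ forces, is that the local error estimate must be obtained \emph{without} relying on any bound involving $\lVert A \rVert$; instead we must exploit the smoothing/parabolic estimates \eqref{bound} and \eqref{smooth_pr} from Assumption~\ref{hp_split}\ref{A_ass}, which is exactly why the final bound carries the logarithmic factor $(1+\lvert\log\tau\rvert)$.

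First I would estimate the \emph{local error} of one Lie--Trotter step, i.e.\ compare the exact flow of \eqref{eq} over $[t_0,t_0+\tau]$ with $\PhiA\circ\PhiG$ applied to the same initial value $X(t_0)$. Using the variation-of-constants formula for the exact solution and the representations of $\PhiG$ and $\PhiA$, the local error can be written as a difference of two integral terms. The standard manipulation is to insert and subtract, writing the defect as an integral of the form $\int_{t_0}^{t_0+\tau}\ee^{(t_0+\tau-s)A}\bigl(\text{commutator-type remainder}\bigr)\ee^{(t_0+\tau-s)A\trasp}\dd s$, where the integrand measures the failure of $\PhiA$ and $\PhiG$ to commute. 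After a Taylor expansion of $s\mapsto G(s,X(s))$ around $t_0$ and of the flows, the leading remainder involves terms like $A\,G - G\,$(no $A$), i.e.\ expressions of the type $AZ+ZA\trasp$ acted on by the semigroup. Here is where \eqref{smooth_pr} is essential: a naive estimate would produce a factor $\lVert A\rVert$, but \eqref{smooth_pr} bounds $\lVert\ee^{(t_0+\tau-s)A}(AZ+ZA\trasp)\ee^{(t_0+\tau-s)A\trasp}\rVert$ by $\tfrac{C_s}{t_0+\tau-s}\ee^{(t_0+\tau-s)\omega}\lVert Z\rVert$. Integrating the singular kernel $\tfrac{1}{t_0+\tau-s}$ over $[t_0,t_0+\tau]$ against the smooth factor produces precisely a bound of order $\tau(1+\lvert\log\tau\rvert)$; the logarithm is the trace left by the integrable singularity at the upper endpoint. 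The smoothness of $G$ along the solution (Assumption~\ref{hp_split}\ref{G_ass}, giving the Lipschitz and boundedness constants $L,B$ in \eqref{LB}) is what lets me control the remaining regular integrand terms by $C\tau^2$.

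Next I would establish \emph{stability} of the discrete propagator $\PhiA\circ\PhiG$, i.e.\ that applying one step to two nearby matrices does not amplify their difference by more than a factor $1+C\tau$. For $\PhiA$ this is immediate from the contraction-type bound \eqref{bound}, which gives $\lVert\PhiA(Z_1)-\PhiA(Z_2)\rVert\le\ee^{\tau\omega}\lVert Z_1-Z_2\rVert$. For $\PhiG$, the flow of the non-stiff equation $\dot X_2=G(t,X_2)$, a standard Gronwall argument using the local Lipschitz constant $L$ from \eqref{LB} gives $\lVert\PhiG(Z_1)-\PhiG(Z_2)\rVert\le\ee^{\tau L}\lVert Z_1-Z_2\rVert$. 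Composing, one step is Lipschitz with constant $\ee^{\tau(\omega+L)}$, so that $n$ steps amplify by at most $\ee^{n\tau(\omega+L)}\le\ee^{(\omega+L)(T-t_0)}$, a constant depending only on $\omega,L,T$. A routine check (implicitly part of the recursion alluded to before Subsection~\ref{err_spl}) is that the iterates stay inside the neighbourhood $\mathcal U$ where $L$ and $B$ are valid, which an appropriately small $\tau_0$ guarantees.

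Finally I would combine the two ingredients in the usual telescoping/fan estimate: writing the global error $E^n_{sp}$ as a sum of local errors each propagated forward by the remaining $n-k$ steps, stability converts this into $\sum_{k=1}^{n}\ee^{(\omega+L)(n-k)\tau}\cdot\bigl(\text{local error}\bigr)$. Inserting the local error bound of order $\tau(1+\lvert\log\tau\rvert)$ (which is $O(\tau)$ per step after dividing by the step, or summed over $n\le (T-t_0)/\tau$ steps yields $O(1)$), the geometric factors sum to a constant multiple of $\ee^{(\omega+L)(T-t_0)}$, and I obtain
\[
\lVert X(t_0+n\tau)-(\PhiA\circ\PhiG)^n(X^0)\rVert\le C\,\tau(1+\lvert\log\tau\rvert),
\]
with $C$ depending on $\omega,C_s,L,B,T$ but not on $\tau$ or $n$, as claimed. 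The main obstacle, and the genuinely non-routine part, is the \emph{stiffness-independent} local error analysis in the first step: one must organize the Taylor/commutator remainder so that every term carrying the unbounded operator $A$ appears only in the combination $AZ+ZA\trasp$ to which the parabolic smoothing estimate \eqref{smooth_pr} applies, thereby trading a would-be factor $\lVert A\rVert$ for the harmless $(1+\lvert\log\tau\rvert)$. Handling the endpoint singularity of the kernel $1/(t_0+\tau-s)$ carefully — rather than bounding it crudely — is what keeps the final order at $\tau(1+\lvert\log\tau\rvert)$ instead of something worse.
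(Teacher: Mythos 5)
Your overall skeleton (local error, stability of $\PhiA\circ\PhiG$ with constant $\ee^{(\omega+L)\tau}$, Lady Windermere's fan, discrete Gronwall) is the right frame, and your stability argument is exactly the one used in the paper. The gap is in the local error estimate, and it is fatal to the argument as written. If you apply the smoothing bound \eqref{smooth_pr} \emph{locally}, i.e.\ with the small time parameter $\tau-r\le\tau$ inside the one-step defect, the best you can obtain is
\begin{equation*}
\lVert e^k_{sp}\rVert \le C_sB\,\ee^{\tau\omega}\int_0^\tau\!\!\int_0^s\frac{\dd r\,\dd s}{\tau-r}+C\tau^2 = C_sB\,\ee^{\tau\omega}\,\tau+C\tau^2,
\end{equation*}
a local error of size $O(\tau)$ — not $O(\tau^2)$ and not $O(\tau^2\lvert\log\tau\rvert)$. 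Summing $n\sim (T-t_0)/\tau$ such terms with stability gives $O(1)$, i.e.\ no convergence at all; your own parenthetical remark that the sum ``yields $O(1)$'' concedes this, yet the conclusion claims $O(\tau(1+\lvert\log\tau\rvert))$. The two statements are arithmetically incompatible: the logarithm cannot be produced at the level of a single step and then survive division by $\tau$ in the summation.

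The missing idea — and the actual content of the paper's proof — is to \emph{postpone} the estimation of the local error until after the error recursion has been unrolled. In the global sum the local error of step $k$ appears sandwiched between semigroups over the \emph{remaining} time, $\ee^{(n-k)\tau A}\,e^k_{sp}\,\ee^{(n-k)\tau A\trasp}$, and it is there, with the large time parameter $(n-k)\tau$, that \eqref{smooth_pr} is applied to the combination $AG+GA\trasp$ hidden inside $e^k_{sp}$. This turns the $k$-th propagated local error into a term of size $C\tau^2/((n-k)\tau)$, the harmonic sum $\sum_{j=1}^{n-1}j^{-1}\le 1+\log n$ then produces the logarithm at the \emph{global} level, and the single non-smoothable term $k=n$ is bounded directly by $O(\tau)$ as above. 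Without this reorganization the proof does not close; with it, the rest of your argument (the $O(\tau^2)$ bound for the regular terms via \eqref{LB}, the stability factor $\ee^{(\omega+L)(T-t_0)}$, and Gronwall) goes through as you describe.
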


\begin{proof}
	The solution of the matrix differential equation \eqref{eq} can be expressed by means of the variation-of-constants formula. Given the initial value $X(t_{k-1})=Z$, the solution at time $t_k = t_{k-1}+\tau$ with step size $\tau > 0$ is 
	\begin{align*}
	X(t_k) =\ee^{\tau A} Z\ee^{\tau A\trasp} + \int_0^\tau\ee^{(\tau-s)A} G(t_{k-1}+s,X(t_{k-1}+s))\ee^{(\tau-s)A\trasp}\dd s.
	\end{align*}
	The exact solution of the first full-rank subproblem \eqref{sub1} at $t_k$ with initial value $X_1(t_{k-1}) = X_2(t_k)$ is given by
	\begin{equation} \label{exact_1} 
	\Phi^A_{\tau}(X_2(t_k)) = X_1(t_k) =\ee^{\tau A} X_2(t_k) \ee^{\tau A\trasp},
	\end{equation} 
    whereas the exact solution of the second full-rank subproblem \eqref{sub2} with initial value $X_2(t_{k-1})=Z$ can be expressed as 
	\begin{equation} \label{exact_2} 
	\Phi^G_{\tau}(Z) = X_2(t_k) = Z+\tau G(t_{k-1},Z)+\int_{0}^{\tau} (\tau-s) \ddot{X}_2 (t_{k-1}+s) \dd s.
	\end{equation}
	Composing \eqref{exact_1} with \eqref{exact_2} gives the full-rank Lie--Trotter splitting solution    
	\begin{equation} \label{exact_full}
	\mathcal{L}_{\tau} (Z) = \ee^{\tau A} Z \ee^{\tau A\trasp} + \tau \ee^{\tau A} G(t_{k-1},Z) \ee^{\tau A\trasp} + \int_{0}^{\tau} (\tau-s ) \ee^{\tau A} \ddot{X}_2(t_{k-1}+s) \ee^{\tau A\trasp}\dd s.
	\end{equation}
	The local error of the method at $t_k$ is  
	\begin{align*}
	e^{k}_{sp} &= X(t_k) - \mathcal{L}_{\tau} (X(t_{k-1})) \\
	 &= \int_0^\tau\ee^{(\tau-s)A} G(t_{k-1}+s,X(t_{k-1}+s))\ee^{(\tau-s)A\trasp} \dd s  \\
	& - \tau \ee^{\tau A} G(t_{k-1},X(t_{k-1})) \ee^{\tau A\trasp} - \int_{0}^{\tau} (\tau-s ) \ee^{\tau A} \ddot{X}_2(t_{k-1}+s) \ee^{\tau A\trasp}\dd s.
	\end{align*}
	Let $f(s)=\ee^{(\tau-s)A} G(t_{k-1}+s,X(t_{k-1}+s))\ee^{(\tau-s)A\trasp}$. Then the first integral above can be rewritten as 
	\[ 
	\int_0^{\tau}{f(s) \dd s} = \int_0^{\tau} \left[ f(0) + \int_0^{s} \dot{f} (r) \dd r \right]\dd s. 
	\]
	Using the fact that a matrix commutes with its exponential, the derivative of $f$ is 
	\[ 
	\dot{f}(s) = -\ee^{(\tau-s)A} \left(AG+GA\trasp - \frac{\dd{G}}{\dd s}\right) \ee^{(\tau-s)A\trasp}.
	\]
	Recall that the function $G$ is assumed to be continuously differentiable in a neighbourhood of the exact solution. Hence, employing the boundedness of
	\[ 
    \ddot{X}_2(t) = \frac{\dd{G}}{\dd t}(t) = \partial_t G(t,X_2(t))+\partial_X G(t,X_2(t))G(t,X_2(t))
	\]
	we are left with a simpler form of the local error:
	\begin{equation} \label{loc_err_formula_sp} 
	e^{k}_{sp} = -\int_0^{\tau} \int_0^{s} \ee^{(\tau-r)A} \left(AG+GA\trasp \right) \ee^{(\tau-r)A\trasp} \dd r \dd s + \mathcal{O}(\tau^2).  
	\end{equation}
	Due to the presence of the matrix $A$, we do not bound the local error \eqref{loc_err_formula_sp} directly. Instead, we solve the error recursion first.
	Recalling that $X^{n-1}~=~\mathcal{L}_{\tau}^{n-1}(X^0)$, we write the global error of the Lie--Trotter splitting as 
	\begin{align*} 
	E^n_{sp} = \mathcal{L}_{\tau} (X(t_{n-1}))-\mathcal{L}_{\tau} (X^{n-1})+e_{sp}^n,
	\end{align*}
	where the first two terms represent the propagation of $E^{n-1}_{sp}$ by the numerical method $\mathcal{L}_{\tau}$, which is nonlinear.
	Making use of formula \eqref{exact_full}, we write  
	\begin{align} \label{lie_prop} 
	\mathcal{L}_{\tau} (X(t_{n-1}))-\mathcal{L}_{\tau} (X^{n-1}) = \ee^{\tau A} E^{n-1}_{sp}\ee^{\tau A\trasp} + \ee^{\tau A} H(X(t_{n-1}),X^{n-1})\ee^{\tau A\trasp},
	\end{align}
	where 
	\begin{align*} 
	H(X(t_{n-1}),X^{n-1})  = & \,\tau \left[G(t_{n-1},X(t_{n-1}))-G(t_{n-1},X^{n-1}) \right] \\
	& + \int_{0}^{\tau} (\tau-s) \left[ \ddot{X}_2(t_{n-1}+s) - \ddot{\widetilde{X}}_2 (t_{n-1}+s) \right] \dd s.
	\end{align*}
	The functions $X_2$ and $\widetilde{X}_2$ are the solutions of the second subproblem \eqref{sub2} with initial values $X(t_{n-1})$ and $X^{n-1}$, respectively. 
	Starting from $X(t_0)$ and $X^0$ and using expression \eqref{lie_prop} for their propagation by the Lie--Trotter splitting method, we rewrite the global error as
	\begin{align} \label{glob_err_formula_sp}
	\begin{split}
	E^n_{sp} = & \, \ee^{n \tau A} E^0_{sp} \ee^{n\tau A\trasp} + \underbrace{\sum_{k = 0}^{n-1} \ee^{(n-k)\tau A} H(X(t_{k}),X^{k})\ee^{(n-k)\tau A\trasp}}_{=: D_1} \\
	& + \underbrace{\sum_{k=1}^{n}\ee^{(n-k)\tau A} e^k_{sp} \ee^{(n-k)\tau A\trasp}}_{=: D_2}.
	\end{split}
	\end{align}
	By the choice of the initial value $X(t_0) = X^0$ we have $ \lVert E^0_{sp} \rVert = 0$. Since the expression $H$ mainly consists of the nonlinear function $G$, which by Assumption~\ref{hp_split}\ref{G_ass} is Lipschitz continuous, and of its derivative $\ddot{X}_2$, which is continuous, we have the bound 
	\begin{align*}
	\lVert H(X(t_k),X^k) \rVert \leq C (\tau \lVert E^k_{sp} \rVert + \tau^2). 
	\end{align*}
	Hence, property \eqref{bound} yields the following bound for the second term in the representation of the global error \eqref{glob_err_formula_sp}: 
	\begin{equation} \label{D1} 
	\lVert D_1 \rVert \leq C \sum_{k = 0}^{n-1} \ee^{(n-k)\tau\omega} \left(\tau \lVert E^k_{sp} \rVert + \tau^2 \right) \leq C \tau  \left(\sum_{k = 0}^{n-1} \lVert E^k_{sp} \rVert + 1\right). 
	\end{equation}
	Now, in order to bound $D_2$, we have to consider the form of the local error in \eqref{loc_err_formula_sp}. We have 
	\begin{align*}
	\lVert D_2 \rVert & \leq  \sum_{k=1}^{n} \bigg\lVert \int_{0}^{\tau} \int_{0}^{s} \ee^{(\tau-r)A}  \left( \ee^{(n-k)\tau A} (A G + G A\trasp ) \ee^{(n-k)\tau A\trasp} \right)  \ee^{(\tau-r)A\trasp} \dd r \dd s \bigg\rVert. 
	\end{align*}
	The quantity in the parentheses can be bounded by means of assumption \eqref{smooth_pr}. Further, employing assumption \eqref{bound} we obtain
	\begin{align*}
	\lVert D_2\rVert & \leq  C \sum_{k=1}^{n-1} \frac{1}{(n-k)\tau}\ee^{(n-k)\tau \omega} \int_{0}^{\tau} \int_{0}^{s} \ee^{(\tau-r)\omega} \dd r \dd s + C\tau
	\end{align*}
	and achieve the following bound
	\begin{equation} \label{D2}
	\lVert D_2 \rVert \leq C \tau^2 \sum_{k=1}^{n-1} \frac{1}{k \tau} + C \tau.
	\end{equation}
	Now, collecting \eqref{glob_err_formula_sp}, \eqref{D1} and \eqref{D2} yields the error bound:
	\[ 
	\lVert E^k_{sp} \rVert \leq  C \tau \sum_{k = 0}^{n-1} \lVert E^k_{sp} \rVert + C \tau \log n  + C \tau. 
	\]
	The global error bound follows now from a discrete Gronwall inequality, see, e.g., \cite{Gronwall86}.
\end{proof}

\subsection{The low-rank Lie--Trotter splitting} \label{err_lr}
In this section we compare the full-rank Lie--Trotter splitting \eqref{lie} and the low-rank Lie--Trotter splitting. We recall that the solution obtained with the latter is given by $Y^n=(\PhiA \circ \phiG)^n (Y^0)$. The following proposition states the error bound. Its proof is given at the end of this section.
\begin{prop}\label{globerr}
	Under Assumption \ref{hp_split}, the difference $E^n_{lr}=(\PhiA \circ \PhiG)^n (Y^0) - (\PhiA \circ \phiG)^n (Y^0)$ is uniformly bounded on $t_0 \leq t_0+n\tau \leq T$ as 
	\begin{align*}
	\lVert E^n_{lr} \rVert \leq c_2 \varepsilon + c_3 \tau , 
	\end{align*}
	where the constants $c_2$ and $c_3$ depend on $\omega$, $L$, $B$ and $T$, but are independent of $\tau$ and $n$.  
\end{prop}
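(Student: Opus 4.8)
The plan is to reduce the global difference $E^n_{lr}$ to a telescoping sum of one-step (local) errors that are transported by the full-rank scheme $\mathcal{L}_{\tau}=\PhiA\circ\PhiG$, and then to bound each local error by the exact flow $\PhiG$ of subproblem \eqref{sub2} and invoking the robust error analysis of the projector-splitting integrator from \cite{KLW16}. Writing $Y^k=(\PhiA\circ\phiG)^k(Y^0)=\LL_{\tau}^{\,k}(Y^0)$ for the low-rank iterates and using the standard identity
\[\mathcal{L}_{\tau}^n - \LL_{\tau}^n = \sum_{k=0}^{n-1} \mathcal{L}_{\tau}^{\,n-1-k}\circ\big(\mathcal{L}_{\tau}-\LL_{\tau}\big)\circ\LL_{\tau}^{\,k},\]
I would express the global error as
\[E^n_{lr} = \sum_{k=0}^{n-1}\Big[\mathcal{L}_{\tau}^{\,n-1-k}\big(\mathcal{L}_{\tau}(Y^k)\big) - \mathcal{L}_{\tau}^{\,n-1-k}\big(\LL_{\tau}(Y^k)\big)\Big],\]
a superposition of local errors $d^{k+1}:=\mathcal{L}_{\tau}(Y^k)-\LL_{\tau}(Y^k)$ propagated to $t_n$ by the nonlinear (but Lipschitz) full-rank flow $\mathcal{L}_{\tau}^{\,n-1-k}$.

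The decisive observation for the local error is that $\PhiA$ acts as the \emph{linear} map $Z\mapsto\ee^{\tau A}Z\ee^{\tau A\trasp}$, which is common to both schemes. Hence
\[d^{k+1} = \PhiA\big(\PhiG(Y^k)\big) - \PhiA\big(\phiG(Y^k)\big) = \ee^{\tau A}\big(\PhiG(Y^k)-\phiG(Y^k)\big)\ee^{\tau A\trasp},\]
and the stability bound \eqref{bound} gives $\lVert d^{k+1}\rVert \le \ee^{\tau\omega}\,\lVert \PhiG(Y^k) - \phiG(Y^k)\rVert$. Everything is thereby reduced to the one-step error of the projector-splitting integrator $\phiG$ for \eqref{sub2} against its exact flow $\PhiG$, started from the same rank-$r$ point $Y^k\in\M$. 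Here I invoke the robust local error estimate of \cite{KLW16}: under Assumption \ref{hp_split}\ref{G_ass} (local Lipschitz constant $L$ and bound $B$) and Assumption \ref{hp_split}\ref{M-R} (the non-tangential defect of $G$ is at most $\varepsilon$), this error is bounded, independently of the smallest singular value of $Y^k$, by
\[\lVert \PhiG(Y^k) - \phiG(Y^k)\rVert \le c\,(\varepsilon\tau + \tau^2),\]
so that $\lVert d^{k+1}\rVert \le c\,\ee^{\tau\omega}(\varepsilon\tau+\tau^2)$.

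It then remains to control the transport operator $\mathcal{L}_{\tau}^{\,n-1-k}$. Its Lipschitz constant factorises into that of $\PhiA$, which is $\ee^{\tau\omega}$ by \eqref{bound}, and that of the exact nonlinear flow $\PhiG$, which is $\ee^{L\tau}$ by a Gronwall estimate on \eqref{sub2} with the constant $L$ from \eqref{LB}. Iterating yields $\mathrm{Lip}\big(\mathcal{L}_{\tau}^{\,n-1-k}\big)\le \ee^{(n-1-k)\tau(\omega+L)}\le\kappa$, with $\kappa$ depending only on $\omega,L,T$ since $(n-1-k)\tau\le T-t_0$. Combining this with the local bound and using $n\tau\le T-t_0$,
\[\lVert E^n_{lr}\rVert \le \kappa\sum_{k=0}^{n-1}\lVert d^{k+1}\rVert \le \kappa\,\ee^{\tau\omega}c\,(T-t_0)\,(\varepsilon+\tau) =: c_2\varepsilon + c_3\tau,\]
which is the asserted estimate. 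Note that, unlike in Proposition \ref{spl_err}, no discrete Gronwall argument is needed, because the local errors $d^{k+1}$ do not involve $E^k_{lr}$.

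I expect the main obstacle to be the local error bound $\lVert \PhiG(Y) - \phiG(Y)\rVert \le c(\varepsilon\tau+\tau^2)$ of the projector-splitting integrator: this is precisely the delicate, $A$-independent and singular-value-robust estimate of \cite{KLW16}, whose proof rests on the exactness of the tangent-space projection splitting together with the near-tangentiality Assumption \ref{hp_split}\ref{M-R}, and I would either cite it directly or reproduce it as a preparatory lemma. A secondary, routine point is the neighbourhood/recursion argument announced at the start of Section \ref{proofs}, which guarantees that all iterates $Y^k$ remain in the compact set $\mathcal{U}$ where $L$ and $B$ are valid, so that the constants stay uniform in $\tau$ and $n$.
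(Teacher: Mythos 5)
Your proposal is correct and takes essentially the same route as the paper: reduce the one-step defect to $\lVert(\PhiG-\phiG)(Y^k)\rVert$ via the linearity of $\PhiA$ and bound \eqref{bound}, control that defect by the singular-value-robust local error estimate of the projector-splitting integrator from \cite{KLW16} (which the paper spells out in Lemma~\ref{locerr} through the auxiliary tangential problem $\dot W=M(t,W)$ plus a Gronwall comparison), and accumulate using the $\ee^{(L+\omega)\tau}$-stability of $\mathcal L_\tau$. Your telescoping (Lady Windermere) sum is just the unrolled form of the paper's error recursion, so the two arguments coincide.
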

The low-rank Lie--Trotter splitting defined in \eqref{approx_lie} with initial value $Y^0$ results, after one time step, in $Y^1=(\PhiA \circ \phiG) (Y^0)$.
It consists of first applying the projector-splitting integrator to the evolution equation \eqref{Ysub2} for the nonlinearity on the tangent space $\mathcal{T}_{Y_2}\M$ and then solving exactly the first subproblem \eqref{Ysub1} with initial value $\phiG (Y^0)$.
We start with the following preliminary result. 

\begin{lemma} \label{locerr}
	Under Assumption \ref{hp_split}, the following bound holds uniformly for each $n \geq 1$ satisfying $t_0 \leq t_0 + n\tau \leq T$ 
	\begin{align*}
	\lVert  (\PhiA \circ \PhiG) (Y^{n-1}) - (\PhiA \circ \phiG) (Y^{n-1}) \rVert \leq b_1\varepsilon \tau + b_2 \tau^2,  
	\end{align*}
	as long as $\lVert Y^{n-1}-Y(t_{n-1})\rVert \leq \gamma$ for given $\gamma > 0$, see \eqref{LB}. The constants $b_1$ and $b_2$ depend on $\omega$, $L$, $B$ and $T$, but are independent of $\tau$ and $n$.
\end{lemma}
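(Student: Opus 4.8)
The plan is to exploit that $\PhiA$ acts linearly on its argument and then to recognise the remaining quantity as the one-step error of the projector-splitting integrator. Since $\PhiA(Z)=\ee^{\tau A}Z\ee^{\tau A\trasp}$ is linear in $Z$, I would write
\[
(\PhiA\circ\PhiG)(Y^{n-1})-(\PhiA\circ\phiG)(Y^{n-1})=\ee^{\tau A}\bigl(\PhiG(Y^{n-1})-\phiG(Y^{n-1})\bigr)\ee^{\tau A\trasp},
\]
so that property \eqref{bound} immediately yields
\[
\lVert(\PhiA\circ\PhiG)(Y^{n-1})-(\PhiA\circ\phiG)(Y^{n-1})\rVert\le\ee^{\tau\omega}\,\lVert\PhiG(Y^{n-1})-\phiG(Y^{n-1})\rVert .
\]
Because $\ee^{\tau\omega}$ is bounded for $\tau\le T-t_0$, it remains to bound $\lVert\PhiG(Y^{n-1})-\phiG(Y^{n-1})\rVert$, i.e. the local error of the projector-splitting integrator $\phiG$ measured against the exact flow $\PhiG$ of the full, unprojected nonlinear subproblem \eqref{sub2}, both started at the \emph{same} point $Y^{n-1}$.

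To analyse this remaining quantity I would insert the exact flow $\Phi_\tau^{PG}$ of the projected subproblem \eqref{Ysub2} as an intermediate object and split
\[
\PhiG(Y^{n-1})-\phiG(Y^{n-1})=\bigl(\PhiG(Y^{n-1})-\Phi_\tau^{PG}(Y^{n-1})\bigr)+\bigl(\Phi_\tau^{PG}(Y^{n-1})-\phiG(Y^{n-1})\bigr).
\]
For the first difference I would compare the two exact trajectories through their integral form: the derivatives differ by $G(t,X_2)-P(Y_2)G(t,Y_2)$, which decomposes into the Lipschitz term $G(t,X_2)-G(t,Y_2)$, handled by \eqref{LB} and a Gronwall argument so that it contributes only at higher order, and the non-tangential part $(I-P(Y_2))G(t,Y_2)$. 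Here Assumption \ref{hp_split}\ref{M-R} is decisive: along the exact low-rank solution $(I-P(Y))G=(I-P(Y))R$ has norm at most $\varepsilon$, and using $\lVert Y^{n-1}-Y(t_{n-1})\rVert\le\gamma$ together with the boundedness and Lipschitz continuity of $G$ from \eqref{LB}, the non-tangential part stays of size $\varepsilon+\mathcal{O}(\tau)$ along the short trajectory; its integral over $[0,\tau]$ is therefore $\mathcal{O}(\tau\varepsilon+\tau^2)$. The second difference is the genuine discretization error of the projector-splitting integrator applied to \eqref{Ysub2}, which I would bound by adapting the robust local error analysis of \cite{KLW16}: the tangential component $M$ is reproduced up to $\mathcal{O}(\tau^2)$ thanks to the exactness property of the integrator, while the remainder of size $\varepsilon$ contributes $\mathcal{O}(\tau\varepsilon)$. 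Summing the two contributions and reinstating $\ee^{\tau\omega}$ gives the claimed bound $b_1\varepsilon\tau+b_2\tau^2$ with $b_1,b_2$ depending only on $\omega,L,B,T$.

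The hard part will be the second difference, namely establishing the projector-splitting local error in a way that is uniform in the singular values of the iterates. A naive estimate would differentiate the projection $P$ and hence invoke its Lipschitz constant, which blows up like the reciprocal of the smallest singular value; this is exactly what the approach must avoid. The resolution is to never differentiate $P$ and instead rely on the exactness property and the substep structure of the integrator as in \cite{KLW16}, so that only the size $\varepsilon$ of the non-tangential remainder and the constants $L,B$ of $G$ enter, with no singular-value dependence. A secondary but necessary technical point is to verify that the intermediate trajectory emanating from $Y^{n-1}$ remains in the neighbourhood where \eqref{LB} is valid; this is guaranteed by the hypothesis $\lVert Y^{n-1}-Y(t_{n-1})\rVert\le\gamma$ together with the bound $B$ on $G$, which keeps the drift over a single step of size $\mathcal{O}(\tau)$ and thus within the admissible region for $\tau$ small enough.
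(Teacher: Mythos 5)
Your opening move coincides with the paper's: linearity of $\PhiA$ together with \eqref{bound} reduces everything to the one-step defect $\lVert(\PhiG-\phiG)(Y^{n-1})\rVert$, and like the paper you then lean on the robust local-error machinery of \cite{KLW16}. The route diverges in the choice of intermediate object. The paper does \emph{not} insert the exact flow $\Phi_{\tau}^{PG}$ of the projected equation \eqref{Ysub2}; it inserts the solution $W$ of the purely tangential problem $\dot W=M(t,W)$ started from a \emph{modified} initial value $W^{n-1}$, whose existence with $\lVert Y^{n-1}-W^{n-1}\rVert\le\tau(4BL\tau+2\varepsilon)$ and the companion bound $\lVert\phiG(Y^{n-1})-W(t_n)\rVert\le\tau(9BL\tau+4\varepsilon)$ are precisely what \cite[Lemma~2.2]{KLW16} supplies; the remaining comparison of $\PhiG(Y^{n-1})$ with $W(t_n)$ is then a plain Gronwall argument using only $L$ and $\lVert R\rVert\le\varepsilon$. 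This matters because the quantity you isolate as ``the genuine discretization error of the projector-splitting integrator applied to \eqref{Ysub2}'', namely $\Phi_{\tau}^{PG}(Y^{n-1})-\phiG(Y^{n-1})$, is not what \cite{KLW16} bounds: their robust analysis deliberately never measures the integrator against the exact projected flow, whose dependence on the data involves the derivative of $P$ and hence the inverse of the smallest singular value. To control your second difference you would have to reintroduce the tangential auxiliary problem anyway, at which point your preliminary splitting of $\PhiG$ against $\Phi_{\tau}^{PG}$ buys nothing.

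The more substantive gap is your estimate of the non-tangential part off the exact trajectory. Assumption~\ref{hp_split}\ref{M-R} gives $(I-P(Y(t)))G(t,Y(t))=(I-P(Y(t)))R(t,Y(t))$ only \emph{on} the exact low-rank solution $Y(t)$; at a nearby point $Y_2$ with $\lVert Y_2-Y(t)\rVert\le\gamma$, where $\gamma$ is a fixed constant and not $\mathcal{O}(\tau)$, the projection $P(Y_2)$ acts onto a different tangent space, and $P$ is not Lipschitz with a singular-value-independent constant, so Lipschitz continuity of $G$ alone does not yield ``$\varepsilon+\mathcal{O}(\tau)$'' for $(I-P(Y_2))G(t,Y_2)$. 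What is actually needed --- and what the paper tacitly assumes when it rewrites \eqref{Ysub2} as $\dot Y_2=M(t,Y_2)+P(Y_2)R(t,Y_2)$ with $Y_2(t_{n-1})=Y^{n-1}$ --- is that the decomposition $G=M+R$ with $M(t,Z)\in\mathcal{T}_Z\M$ and $\lVert R(t,Z)\rVert\le\varepsilon$ holds in a whole neighbourhood of the exact low-rank solution, not merely along it. With that reading of the assumption your argument closes (and so does the paper's); as literally written, your justification of this step does not.
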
	

\begin{proof}
	We observe that 
	\begin{align*}
	\lVert  (\PhiA \circ \PhiG) (Y^{n-1}) - (\PhiA \circ \phiG) (Y^{n-1}) \rVert & = \lVert  (\PhiA \circ (\PhiG - \phiG)) (Y^{n-1}) \rVert \\
	& \leq \ee^{\tau \omega} \lVert (\PhiG - \phiG) (Y^{n-1}) \rVert,  
	\end{align*}	
	where in the last step we employ bound \eqref{bound} for the matrix exponential operator.
    For estimating the remaining local error $(\PhiG - \phiG) (Y^{n-1})$ of the projector-splitting integrator, we mainly refer to the error analysis in \cite{KLW16}. Let us consider the nonlinear subproblem \eqref{Ysub2}, which by Assumption \ref{hp_split}\ref{M-R} can be written as 
    \begin{align*}
    \dot{Y}_2(t) = M(t,Y_2(t)) + P(Y_2(t))R(t,Y_2(t)), \quad Y_2(t_{n-1}) = Y^{n-1},
    \end{align*}
    for all $n \geq 1$ satisfying $t_0 \leq t_0+n \tau \leq T$.
    Now, dropping the perturbation term yields the auxiliary problem
    \begin{align*}
    \dot{W}(t) = M(t,W(t)), \quad W(t_{n-1}) = W^{n-1}.
    \end{align*}
    Following \cite[Lemma 2.2]{KLW16}, there exists $W^{n-1}$ such that $\lVert Y^{n-1} - W^{n-1} \rVert \leq \tau(4BL\tau + 2\varepsilon)$ and the following bound holds 
    \begin{align*}
    \lVert \phiG(Y^{n-1}) - W(t_n) \rVert \leq \tau (9BL\tau + 4\varepsilon).
    \end{align*} 
    Moreover, by the bound of the perturbation term $R$ and the Lipschitz constant of $G$, we obtain by a Gronwall inequality 
    \begin{align*}
    \lVert \Phi_{\tau}^G(Y^{n-1}) - W(t_n) \rVert \leq \ee^{L\tau}(\tau(4BL\tau + 2\varepsilon)+\tau \varepsilon).
    \end{align*}
    Collecting those two error estimates results in the local error
	\begin{align*}
	\lVert  (\PhiG - \phiG) (Y^{n-1}) \rVert \leq (4BL\ee^{L\tau} + 9BL)\tau^2 + (3\ee^{L\tau} + 4)\varepsilon \tau, 
	\end{align*}
    which proves the stated local error bound for $b_1 = \ee^{\tau \omega}(3\ee^{L\tau} + 4) $ and $b_2 =\ee^{\tau \omega}(4BL\ee^{L\tau} + 9BL)$.
\end{proof}

With this local error estimate at hand, we are now in a position to prove the bound for $E^n_{lr}$. 
\begin{proof}[Proof of Proposition \ref{globerr}]
Let $\widehat{Y}$, $\widetilde{Y} \in \M$. Employing bound \eqref{bound} and the Lip\-schitz continuity \eqref{LB} of the nonlinearity $G$, we obtain 
\begin{align*}
\begin{split}
\lVert ( \PhiA \circ \PhiG) (\widehat{Y}) - ( \PhiA \circ \PhiG ) (\widetilde{Y}) \rVert & = \lVert \PhiA (\PhiG (\widehat{Y})  - \PhiG(\widetilde{Y})) \rVert \\
&\leq \ee^{(L+\omega)\tau} \lVert \widehat{Y} - \widetilde{Y} \rVert,
\end{split}
\end{align*}
which shows stability of the splitting method $\mathcal{L}_{\tau} = \PhiA \circ \PhiG$. 
Combining the stability with the result of Lemma \ref{locerr}, we obtain the following error recursion
\[ \lVert E_{lr}^n \rVert \leq b_1 \epsilon \tau + b_2 \tau^2 + \ee^{(L + \omega)\tau} \lVert E_{lr}^{n-1} \rVert. \] 
The stated bound follows by standard arguments.
\end{proof}

\subsection{Proof of Theorem \ref{main_th}} \label{final_proof}
Finally, we are in a position to combine the results of the previous sections and prove the main result of this paper. 
\begin{proof}[Proof of Theorem \ref{main_th}]
What remains is to give a bound for the propagation $E^n_{\delta}$ of the initial error $\lVert X^0 - Y^0 \rVert$ by $\mathcal{L}_{\tau}$.
Due to stability of $\mathcal{L}_{\tau}$, we have the following bound
\[ \lVert (\PhiA \circ \PhiG )^n (X^0) - ( \PhiA \circ \PhiG )^n (Y^0) \rVert \leq \ee^{(L+\omega)(T-t_0)} \lVert X^0-Y^0 \rVert. 
\] 
Combining the three components of the global error results in the stated bound with $c_0$ containing $C$ and $c_3$, which come from Proposition~\ref{spl_err} and from Proposition \ref{globerr}, respectively, with $c_1$, which is the constant of the bound for the propagated initial value, and with $c_2$, which appears in Proposition \ref{globerr}.
\end{proof}

As a remark, we point out that the low-rank Lie--Trotter splitting integrator is not sensitive to the presence of small singular values.
The low-rank solution of the linear problem is computed directly by exponential integrators, where possibly appearing small singular values do not cause difficulties. Further, they can also occur in the approximation matrix of the nonlinear subproblem. But since we are applying the projector-splitting integrator, which is robust with respect to small singular values, our integration method inherits this favorable property.

\section{Extensions and further convergence results} \label{extensions}
In this section, we comment on the possible extension of the low-rank Lie--Trotter splitting~\eqref{approx_lie} to a \emph{low-rank Strang splitting}, and we sketch some other situations in which the convergence proof of Section~\ref{proofs} also holds.

\subsection{The low-rank Strang splitting}
The main drawback of the Lie--Trotter splitting scheme in application is its low order. Composing the scheme with its adjoint method, which is again a Lie--Trotter splitting with the order of flows reversed, one obtains the formally second-order Strang splitting. In the low-rank situation, the resulting scheme is given by
$$
\Phi^A_{\tau/2} \circ \widetilde\Phi^G_\tau \circ \Phi^A_{\tau/2}.
$$
This scheme is numerically performing very well in the absence of small nonzero singular values, see \cite{MOPP17}. The extension of our convergence proof to this situation, however, is not straightforward. First of all, a second-order scheme needs more regularity of the exact solution, in particular between the (split) vector fields and the boundary conditions. This was worked out for the full-rank Strang splitting in \cite{EO15, JL00}. The same regularity assumptions and/or modifications are also required here. The numerical example, given in \cite[Figure~2]{MOPP17} clearly shows that whenever the needed regularity is missing the order is restricted to 1.25 for a formally second-order splitting. The bottleneck, however, is the fact that the projector-splitting Strang scheme is not proven to be second-order convergent in the case of small nonzero singular values, see \cite{KLW16}.

\subsection{Further convergence results}
For the purpose of simplicity and clarity, we have restricted our convergence analysis up to now to parabolic problems and a nonlinearity $G$ that does not necessarily satisfy the boundary conditions of the involved elliptic differential operator. In this case, the quantity $AG + GA\trasp$ (see \eqref{loc_err_formula_sp}) cannot be bounded independently of the spatial grid size. This is the place where the parabolic smoothing property~\eqref{smooth_pr} enters the game. A typical instance for such a situation is the following. The matrix $A$ stems from the spatial discretization of an elliptic differential operator subject to homogeneous boundary conditions and $G(t,X(t))$ is a (spatially) smooth function that does not vanish at the boundary. 

However, there are interesting situations in which our proof still holds even if~\eqref{smooth_pr} does not hold. A typical possibility is the following one. Let the differential operator be of the form $v\cdot \nabla$, where $v$ is a given velocity vector. We thus consider a transport semigroup in a Hilbert space. This is a semigroup of contractions and satisfies~\eqref{bound} with $\omega=0$. 

If this problem is now considered with periodic boundary conditions, the quantity $AG + GA\trasp$ can be uniformly bounded if $G(t,X(t))$ is smooth in space. In this case, the parabolic smoothing property in not required and low-rank Lie--Trotter splitting is first-order convergent on compact time intervals. As this proof follows from a straightforward modification of the given proof, we do not work out the details.

\section{Differential Lyapunov equations} \label{sect:DLE}
As a special case of the stiff matrix differential equation \eqref{eq}, we consider differential Lyapunov equations (DLEs), which are of crucial importance in many applications, e.g., Kalman filtering, model reduction of linear time-varying systems, optimal filtering or numerical simulation of systems governed by stochastic partial differential equations \cite{LSS16}. 

For DLEs, the term $G(t,X)$ in \eqref{eq} is solution independent. We denote the resulting time-dependent matrix as $Q(t)$. This gives us the DLE 
\begin{align*}
 \dot{X}(t) = AX(t) + X(t)A\trasp + Q(t), \quad X(t_0) = X^0, 
\end{align*}
where $A, Q(t), X(t) \in \mathbb{C}^{m \times m}$. The matrix $Q$ and the initial data $X^0$ are symmetric and positive semidefinite. Since the DLE is linear, its exact solution exists for all times and is also symmetric and positive semidefinite.

In order to find a low-rank approximation $Y(t) \in \M$ for the solution $X(t)$ of the DLE, we follow the procedure described in Section~\ref{procedure}. 
First, we split the DLE into the following two subproblems:
	\begin{alignat*}{2}
	\dot{X}_1(t) &= AX_1(t) + X_1(t)A\trasp, \qquad &  X_1(t_0) & = X_1^0, \\
	\dot{X}_2(t) &= Q(t), & X_2(t_0) & = X_2^0.
	\end{alignat*} 
By $\tilde{\Phi}_{\tau}^Q$ we denote the flow of the second subproblem approximated by means of the projector-splitting integrator. Then, the low-rank solution is computed by the low-rank Lie--Trotter splitting integrator $\mathcal{I}_{\tau} = \PhiA \circ \tilde{\Phi}_{\tau}^Q$, see \eqref{approx_lie} with $Q$ instead of $G$.

The analysis of the global error of this scheme goes along the proofs in Section \ref{proofs}, if the DLE satisfies Assumption~\ref{hp_split} in Section~\ref{conv_theorem}. 
Since DLEs are typically stemming from parabolic partial differential equations, we assume that the matrix $A$ satisfies the properties in Assumption~\ref{hp_split}\ref{A_ass}. The inhomogeneity $Q(t)$ is not solution dependent. Thus, we have $L=0$, and Assumption~\ref{hp_split}\ref{G_ass} is satisfied.
To fulfill Assumption~\ref{hp_split}\ref{M-R}, we have to assume that $Q(t)$ is in the tangent space $\mathcal{T}_{Y}\mathcal{M}$ up to a small perturbation $R(t,Y):=Q(t) - P(Y)Q(t)$, i.e., we assume $\lVert R(t,Y) \rVert \leq \varepsilon$. This assumption is strictly related to the existence of a low-rank structure for the solution of DLEs. Some theoretical results are given in \cite{Antoulas02, Penzl00, Stillfjord08}.

Thus, we can simply apply the error analysis given in Section \ref{proofs}. The bound of the global error of the full-rank Lie--Trotter splitting integrator stays the same, i.e., $ \lVert E^n_{sp} \rVert \leq C\tau(1 +\lvert \log \tau \rvert)$ with the only difference that here the constant $C$ does not depend on $L$. Furthermore, the result given in Proposition \ref{globerr} becomes
$
\lVert E^n_{lr} \rVert \leq c_2 \varepsilon.
$
We observe that, compared to the general case, the constant $c_3$ drops here. This clearly follows from the fact that the constant $b_2$ of Lemma  \ref{locerr} is zero here. Also the constant $c_1$ appearing in the error bound for the propagated difference between the full-rank and the low-rank initial values does not depend on $L$.

Finally, we remark that the low-rank Lie--Trotter splitting \eqref{approx_lie} can be tailored to preserve symmetry and positive semidefiniteness of the solution, as explained in \cite{MOPP17}. This modification does not introduce any further difficulties in the convergence analysis, since it is only based on a different representation of the solution. A symmetric variant of the SVD-like decomposition in Subsection~\ref{proj-split} is employed in the modified algorithm.

\section{Differential Riccati equations} \label{sect:DRE}
The class of matrix differential equations of the form \eqref{eq} also includes differential Riccati equations (DREs). They play a crucial role in many applications, such as optimal and robust control problems, optimal filtering, $H_{\infty}$ control of linear time varying systems, and differential games, see \cite{AKFJ03,IK99,PUS00}. Further, several integrators based on low-rank approximations have been proposed in the past years. In particular, we mention methods based on backward differentiation formulas and Rosenbrock methods \cite{BM13,BennerMena2}. 

For DREs the nonlinearity $G$ in \eqref{eq} is quadratic and of the form 
\[ G(t,X) = Q(t)-X(t)KX(t). \]
Thus, we consider here the following initial value problem  
\begin{align}\label{DRE}
\dot{X}(t) = AX(t) + X(t)A\trasp + Q(t) - X(t)KX(t), \quad X(t_0) = X^0, 
\end{align}
where $A, Q(t), K, X(t) \in \mathbb{C}^{m \times m}$. The matrices $Q$ and $K$, and the initial value $X^0$ are symmetric and positive semidefinite.  The global existence and positive semidefiniteness of the solution is guaranteed under these conditions, see \cite{DE94}.

As for the case of DLEs, the rather general framework given in Assumption~\ref{hp_split} fits to DREs. 
Condition \ref{A_ass} is fulfilled by assuming that the matrix $A$ is the discretization of a strongly elliptic differential operator. Therefore we restrict ourselves to parabolic problems. Property \ref{G_ass} is a usual requirement in the field of differential equations. On the other hand, condition \ref{M-R} requires more care. 
Let $Y^0$ be a rank-$r$ approximation of $X^0$. Then the rank-$r$ solution of \eqref{DRE} is given by
\begin{equation*}
Y(t) =\ee^{(t-t_0) A} Y^0 \ee^{(t-t_0) A\trasp} + \int_{t_0}^t \ee^{(t-s)A} P(Y(s)) \left(Q(s)-Y(s)KY(s)\right)\ee^{(t-s)A\trasp} \dd s
\end{equation*}
for $t_0\leq t\leq T$. Assumption~\ref{hp_split}\ref{M-R} requires that the nonlinearity $G$ has a particular form when computed along a low-rank solution $Y$. 
We can take the tangential part as 
\[ M(t,Y) = P(Y)Q(t) - YKY,\] 
whereas the residual is 
\begin{equation*} 
R(t,Y) = Q(t)-P(Y)Q(t). 
\end{equation*}
To verify this, note, that the term $M(t,Y)$ is the sum of two elements of the tangent space. Indeed, $P(Y)Q(t)$ is trivially an element of the tangent space. For $YKY$ we proceed as follows. 
Making use of the explicit form of the projection recalled in \eqref{projection}, we observe that
\begin{align*}
P(Y) (YKY) & = UU\trasp (USV\trasp K Y) - UU\trasp (USV\trasp K USV\trasp) VV\trasp + (Y K USV\trasp) VV\trasp \\
& = U S V\trasp K Y - USV\trasp K USV\trasp + Y K USV\trasp \\ 
& = YKY,
\end{align*}
where we have used the fact that $U$ and $V$ have orthonormal columns. 
Since $\TM$ is a vector space we conclude that $M(t,Y) \in \TM$.  
For the residual we assume $\lVert R(t,Y) \rVert \leq \varepsilon$, see \cite{Antoulas02,Stillfjord08} for some related theoretical results.

Although we carried out the proof in the matrix setting, DREs can be also studied from an abstract different point of view, see, e.g., \cite{LT00}. A convergence analysis for a splitting method in the setting of Hilbert--Schmidt operators was proposed in \cite{HS14}. Moreover, different types of splitting for DREs were proposed in \cite{S15, S17}. 

As for DLEs, the low-rank Lie--Trotter splitting can be tailored to preserve symmetry and positive semidefiniteness of the solution. For an algorithm of our proposed method in the case of DREs, see \cite{MOPP17}.

\section{Numerical results}
The aim of this section is to illustrate the numerical behaviour of the low-rank Lie--Trotter splitting \eqref{approx_lie}. In particular, we present a numerical example to illustrate the convergence result of Theorem \ref{main_th}.
 
We study a DRE arising in optimal control for linear quadratic regulator problems. 
Thus we consider the linear control system
\[\dot{x} = Ax + u, \quad x(0) = x_0, \]
where $A \in \mathbb{R}^{m \times m}$ is the system matrix, $x \in \mathbb{R}^m$ the state variable and $u \in \mathbb{R}^m$ the control. The functional $\mathcal{J}$, that has to be minimized is given by
\[ \mathcal{J}(u,x) = \frac{1}{2}\int_{0}^{T} \Big(x(t)^{\mathsf{T}} C^{\mathsf{T}} C x(t)+u(t)^{\mathsf{T}} u(t)\Big) \dd t, \]
where $C \in \mathbb{R}^{q \times m}$ and $(\cdot)^{\mathsf{T}}$ denotes the transpose. 
Further, the optimal control is given in feedback form by $ u_{\text{opt}}(t)=-X(t)x(t)$,  where $X(t)$ is the solution of the following DRE
\begin{equation*}
\dot{X}(t) = A^{\mathsf{T}} X(t) + X(t)A + C^{\mathsf{T}} C - X(t)^2,
\end{equation*}
which is in the form of \eqref{DRE} with $Q = C^{\mathsf{T}} C$ and $K=I_m$ being the identity matrix.  

In order to consider an interesting application, we mainly follow the numerical example  presented in \cite{HS14}. The matrix $A$ arises from the spatial discretization of the diffusion operator
\[ \mathcal{D} = \partial_x \left(\alpha(x) \partial_x (\cdot) \right)  - \lambda I, \]
defined on the spatial domain $\Omega = (0,1)$ subject to homogeneous Dirichlet boundary conditions. We choose $\alpha(x) = 2+\cos(2 \pi x)$ and $\lambda = 1$. 
The finite difference discretization of the operator $\mathcal{D}$ satisfies Assumption \ref{hp_split}\ref{A_ass}. Let $q$ be odd. The matrix $C \in \mathbb{R}^{q \times m}$ is defined by taking $q$ independent vectors 
$\{1,e_1, \dots, e_{(q-1)/2}, f_1, \dots, f_{(q-1)/2}\}$,
 where 
\[ e_k(x)= \sqrt{2}\cos(2 \pi k x) \quad \text{and} \quad f_k(x) = \sqrt{2} \sin(2\pi kx), \quad k = 1, \dots, (q-1)/2\] 
are evaluated at the grid points $\{x_j\}_{j=1}^{m}$, where $x_j = \frac{j}{m+1}$.
The following results are obtained by choosing the initial value $X^0=0$, final time $T = 0.1$, $m = 200$ and $q = 9$.

\begin{figure}[htb]
	\begin{minipage}[b][4.10cm][t]{.465\textwidth}
		\centering
		\includegraphics[width = \columnwidth, height= 4.10cm]{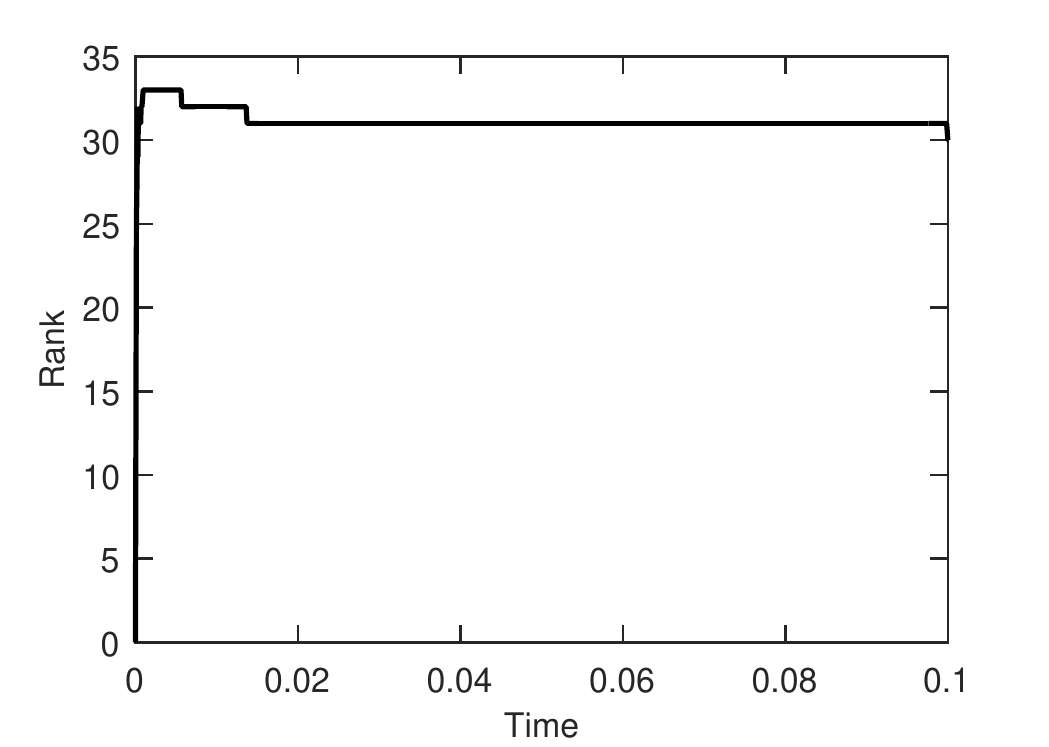}
	\end{minipage}
	\hfill
	\begin{minipage}[b][4.10cm][t]{.465\textwidth}
		\centering
		\includegraphics[width = \columnwidth, height= 3.98cm]{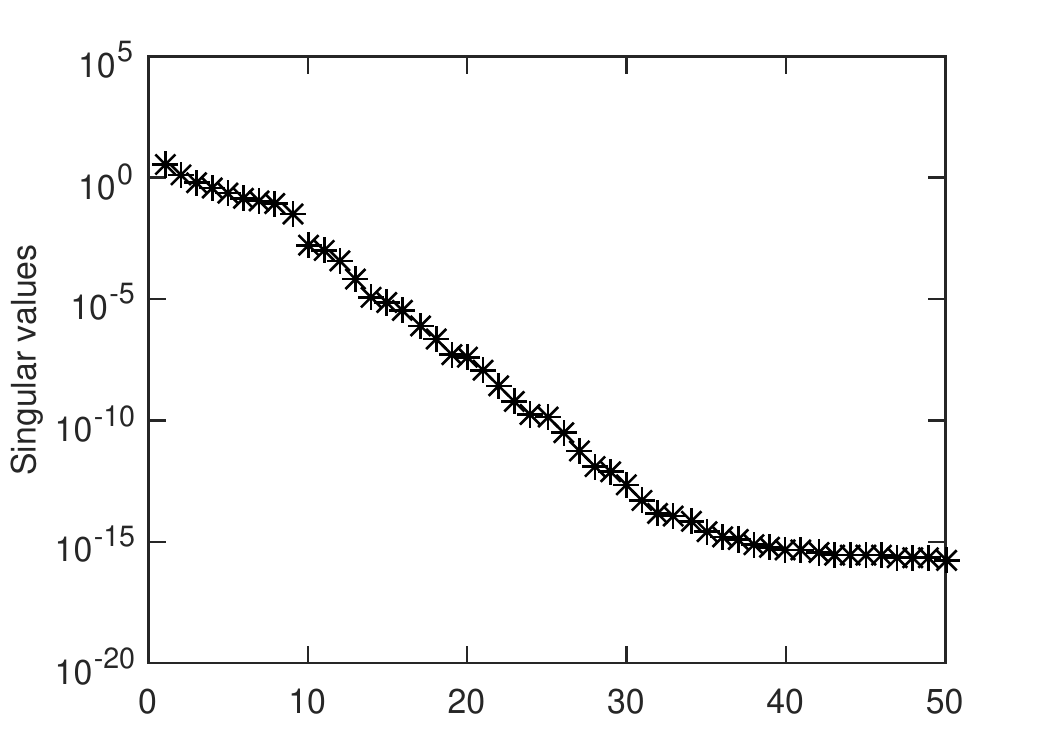}
	\end{minipage}
	\caption{Results for the considered DRE for $m= 200$. Left: Rank of the reference solution as a function of time. Right: First 50 singular values of the reference solution at $T=0.1$.}
	\label{fig:rde_1}
\end{figure}

\begin{figure}[htb]
	\centering
	\includegraphics[width = .465\columnwidth, height= 4.10cm]{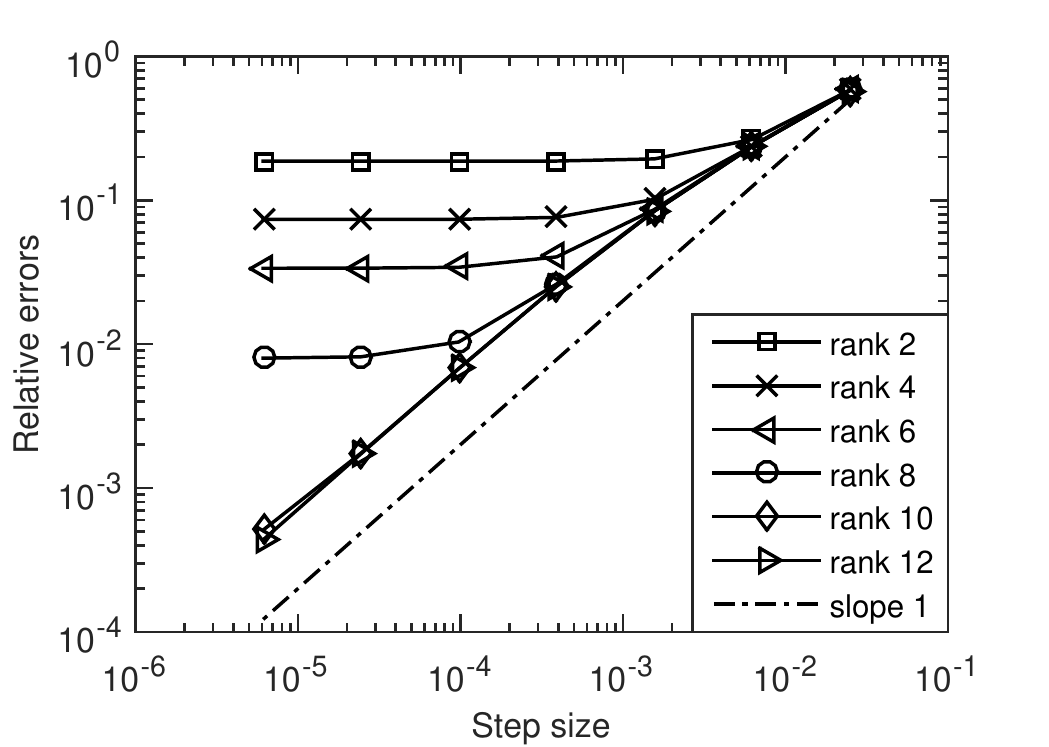}
	\caption{Errors of the low-rank Lie--Trotter splitting in the Frobenius norm as function of step size and rank at $T=0.1$ for the considered DRE for $m=200$.}
	\label{fig:rde_2}
\end{figure}

In Figure \ref{fig:rde_1} left, we show the rank of the reference solution, which is computed by DOPRI5 \cite{HNW93}. We observe that the effective rank of the solution stays low during the evolution in time. In Figure \ref{fig:rde_1} right, we plot the first 50 singular values of the solution at the final integration time. 
In Figure \ref{fig:rde_2}, the error behaviour of the low-rank Lie--Trotter splitting \eqref{approx_lie} is illustrated. We observe that the error is composed by two different contributions. The choice of a small approximation rank results in stagnation of the error. On the other hand, if the low-rank error becomes small enough, one observes the usual order of convergence one for the outer Lie--Trotter splitting. This is consistent with the convergence result given in Theorem \ref{main_th}.

\section*{Acknowledgments}
We thank the referees for their helpful comments, which improved the presentation of this paper. 

\bibliography{references}

\end{document}